\begin{document}
\def\ni{\noindent}
\def\t{\theta}
\def\O{\Omega}
\def\S{\Sigma}
\def\e{\epsilon}
\def\lra{\longrightarrow}
\def\R{{\mathbb R}}
\def\N{{\mathbb N}}
\def\Z{{\mathbb Z}}
\def\RR{{{\mathbb R}}^2}
\def\MS{M^{2{\rm x}2}_s}
\def\N{{\bf N}}
\def\l{\lambda}
\def\LL{${\cal L}$}
\def\E{{\cal E}}
\def\a{{\alpha}}
\def\A{A_{\a}}
\def\ta{\t^{\a}}
\def\rot{{\rm rot}}

\newcommand{\vs}[1]{\vskip #1pt}

\newtheorem{Theorem}{Theorem}[section]
\newtheorem{Definition}[Theorem]{Definition}
\newtheorem{corollary}[Theorem]{Corollary}
\newtheorem{proposition}[Theorem]{Proposition}
\newtheorem{examples}[Theorem]{Esempi}
\newtheorem{example}[Theorem]{Example}
\newtheorem{lemma}[Theorem]{Lemma}
\newtheorem{remark}[Theorem]{Remark}

\catcode`\@=11


   \renewcommand{\theequation}{\thesection.\arabic{equation}}
   \renewcommand{\section}%
   {\setcounter{equation}{0}\@startsection {section}{1}{\z@}{-3.5ex  
plus -1ex
    minus -.2ex}{2.3ex plus .2ex}{\Large\bf}}

\title{\bf Unbounded solutions to a system of coupled asymmetric oscillators at resonance\thanks{Under the auspices of INdAM-GNAMPA, Italy. In particular, the first author acknowledges the support of the GNAMPA Project 2020 ''Problemi ai limiti
per l'equazione della curvatura media prescritta''.}
}
\author{A. Boscaggin, W. Dambrosio and D. Papini}

\date{}
\maketitle


\vs{12}

\begin{quote}
\small
{\bf Abstract.} We deal with the following system of coupled asymmetric oscillators
$$
\left\{\begin{array}{l}
\ddot{x}_1+a_1x_1^+-b_1x^-_1+\phi_1(x_2)=p_1(t) \vspace{7pt}\\
\ddot{x}_2+a_2\,x_2^+-b_2\,x^-_2+\phi_2(x_1)=p_2(t),
\end{array}
\right.
$$
where $\phi_i: \mathbb{R} \to \mathbb{R}$ is locally Lipschitz continuous and bounded, $p_i: \mathbb{R} \to \mathbb{R}$ is continuous and $2\pi$-periodic and the positive real numbers $a_i, b_i$ satisfy
$$
\dfrac{1}{\sqrt{a_i}}+\dfrac{1}{\sqrt{b_i}}=\dfrac{2}{n}, \quad \mbox{ for some } n \in \mathbb{N}.
$$
We define a suitable function $L: \mathbb{T}^2 \to \mathbb{R}^2$, appearing as the higher-dimensional generalization of the well known resonance function used in the scalar setting, and we show how 
unbounded solutions to the system can be constructed whenever $L$ has zeros with a special structure.
The proof relies on a careful investigation of the dynamics of the associated (four-dimensional) Poincar\'e map, in action-angle coordinates.
\medbreak
\noindent
{\bf Keywords.} Systems of ODEs, asymmetric oscillators, unbounded solutions, resonance.

\noindent
{\bf AMS Subject Classification.} 34C11, 34C15.
\end{quote}

\section{Introduction}

In this paper, we investigate the existence of unbounded solutions for a system of coupled asymmetric oscillators of the type
\begin{equation}\label{eq-intro}
\left\{\begin{array}{l}
\ddot{x}_1+a_1x_1^+-b_1x^-_1+\phi_1(x_2)=p_1(t) \vspace{7pt}\\
\ddot{x}_2+a_2\,x_2^+-b_2\,x^-_2+\phi_2(x_1)=p_2(t),
\end{array}
\right.
\end{equation}
where, as usual, $x^{\pm} = \max\{\pm x,0\}$ and, for $i=1,2$, $\phi_i: \mathbb{R} \to \mathbb{R}$ is locally Lipschitz continuous and bounded, $p_i: \mathbb{R} \to \mathbb{R}$ is continuous and $2\pi$-periodic. As for the positive real numbers $a_i, b_i$, we assume that
\begin{equation}\label{eq-reso}
\dfrac{1}{\sqrt{a_i}}+\dfrac{1}{\sqrt{b_i}}=\dfrac{2}{n}, \quad \mbox{ for some } n \in \mathbb{N},
\end{equation}
thus implying that each oscillator is at resonance with respect to the same curve of the Fucik spectrum \cite{Fuc76}.

The study of unbounded solutions for oscillators at resonance is a classical topic in the qualitative theory of ordinary differential equations and we refer to \cite{Maw07} for an excellent survey on this subject. In order to motivate our contribution, the crucial reference to be recalled
here is the seminal paper \cite{AloOrt98} by Alonso and Ortega. It is proved therein (cf. \cite[Theorem 4.1]{AloOrt98}) that, for the scalar asymmetric oscillator
\begin{equation}\label{scalare-intro}
\ddot x + a x^+ - b x^- = p(t), \qquad x \in \mathbb{R},
\end{equation}
with $1/\sqrt{a} + 1/\sqrt{b} = 2/n$, all large solutions are unbounded (either in the past or in the future) whenever the $2\pi$-periodic function
\begin{equation}\label{funzione-risonanza}
\Phi(\theta) = \int_0^{2\pi} C\left( \frac{\theta}{n} + t \right)p(t)\,dt, \qquad \theta \in \mathbb{R}
\end{equation}
has zeros, all simple (in the above formula, $C$ stands for the asymmetric cosine function, cf. Section \ref{subsec-ascos}).
The function $\Phi$, sometimes referred to as resonance function, was previously introduced by Dancer \cite{Dan76} to investigate the $2\pi$-periodic solvability of equation \eqref{scalare-intro}. 
In the linear case ($a= b=n^2$), the function $\Phi$ has (simple) zeros if and only if $\int_0^{2\pi} p(t)e^{-\textrm{i}nt}\,dt \neq 0$: in this case, as well known, all the solutions of $\ddot x + n^2 x = p(t)$ are unbounded; instead, $2\pi$-periodic and unbounded solutions to \eqref{scalare-intro} can coexist in the genuinely asymmetric case  $a \neq b$. The proof of this result was obtained by a careful investigation of the dynamics of the associated Poincar\'e map: more precisely, the zeros of the function $\Phi$ were shown to give rise to invariant sets for the discrete dynamical system associated with \eqref{scalare-intro} and eventually to the existence of unbounded orbits.
Generalization of this approach, requiring the introduction of suitable resonance functions, were later provided for forced asymmetric oscillators
$$
\ddot x + a x^+ - b x^- + \phi(x) = p(t), \qquad x \in \mathbb{R},
$$ 
with $\phi: \mathbb{R} \to \mathbb{R}$ a bounded function (see \cite{Dam02,FabMaw00}) and, more in general, for planar system of the type
$$
Jz' = \nabla H(z) + R(z) + e(t), \qquad z \in \mathbb{R}^2,
$$
where $J$ is the standard symplectic matrix, $H: \mathbb{R}^2 \to \mathbb{R}$ is positive and positively homoegeneous of degree $2$ and
$R: \mathbb{R}^2 \to \mathbb{R}^2$ is bounded (see \cite{FabFon05,Fon04}). We also refer to \cite{AloOrt96,CapDamMaWan13,CapDamWan08,LiuTorQia15,Ma15,MaWan13,Yan04a,Yan04} for related results.

In spite of this extensive bibliography, the existence of unbounded solutions for systems of coupled oscillators seems to be an essentially unexplored topic. To the best of our knowledge, the only available results are the one contained in the recent paper \cite{BosDamPapPP}, dealing however with systems of equations looking like weakly coupled perturbations of linear oscillators
(i.e. $a_i = b_i = n_i^2$ for $i=1,2$) and not being applicable to the more general setting of \eqref{eq-intro}.

The aim of the present paper is to extend the approach of \cite{AloOrt98} in this higher-dimensional framework. 
As expected, this is a quite delicate task, since it leads to the study of the dynamics of a four-dimensional map; nonetheless, we will succeed in providing some partial generalizations of the results in \cite{AloOrt98}. 
In more details, our strategy and results can be described as follows.

In Section \ref{sec-2} we pass to an appropriate set of action-angle coordinates and we perform an asymptotic expansion, at infinity, of the Poincar\'e map associated with \eqref{eq-intro}, cf. \eqref{sistema-asintotico}. In doing this, we are led to define a resonance function defined on the two-dimensional torus,
$$
L: \mathbb{T}^2 \to \mathbb{R}^2, \qquad (\theta_1,\theta_2) \mapsto (L_1(\theta_1,\theta_2),L_2(\theta_1,\theta_2))
$$
which can be thought as the higher-dimensional generalization of the resonance function $\Phi$ defined in \eqref{funzione-risonanza}, see \eqref{proof6}-\eqref{eq-defL}. We notice that when system \eqref{eq-intro} is uncoupled (that is, $\phi_1 = \phi_2 = 0$), then $L(\theta_1,\theta_2) =
(L_1(\theta_1),L_2(\theta_2))$ and, up to a constant, $L_i = \Phi$ with $p = p_i$. 

In Section \ref{sec-3} we investigate the dynamics of this four-dimensional Poincar\'e map and we construct invariant sets, giving rise to unbounded orbits. As in the two-dimensional setting, the zeros of the function
$L$ are shown to play a role; however, due to the coupling terms in system \eqref{eq-intro}, we need here to assume that the Jacobian matrix $JL$ has a special structure at the zeros. More precisely, we introduce the notion of $\mathcal{D}^{\pm}$-matrix, cf. Definition \ref{def-dpiumeno}: again, we observe that such a condition is satisfied by diagonal matrices with concordant sign diagonal entries and, hence, by the matrix $JL$ when system \eqref{eq-intro} is uncoupled and the functions $L_i$ have simple zeros, as in the main result of \cite{AloOrt98}. 
This is a quite technical part of the proof, involving, among other things, a delicate estimate for the $2$-norm of a two-parameter family of suitable matrices, which are perturbations of the identity by $\mathcal{D}^{\pm}$-matrices, cf. Lemma \ref{lem-matrici}.

In Section \ref{sec-4} we finally give our main result for the existence of unbounded solutions to system \eqref{eq-intro}, Theorem \ref{teo-main}.
It provides a positive measure set of initial conditions giving rise to unbounded orbits to \eqref{eq-intro}, whenever the function $L$ has a zero $\omega\in \mathbb{T}^2$ such that the Jacobian matrix $JL(\omega)$ is a  $\mathcal{D}^{\pm}$-matrix.
 Theorem \ref{teo-main}. Notice that this can be interpreted as a kind of local version of the main result in \cite{AloOrt98}. Indeed, we do not claim that every large solution of \eqref{eq-intro} is unbounded: due to the higher-dimensional setting, obtaining this global information seems to be a very hard task, even in the case when all the zeros of $L$ are such that the Jacobian at each zero is a  $\mathcal{D}^{\pm}$-matrix. We mention that the condition for $JL$ to be a $\mathcal{D}^{\pm}$-matrix
can be, in general, not easy to verify. To this end, we discuss some situations in which this can be done and Theorem \ref{teo-main} can thus be applied. The first, quite natural, possibility that we present is a semi-perturbative result (cf. Corollary \ref{cor-piccolo}), dealing with the case in which the $L^{\infty}$-norms of the coupling terms $\phi_1,\phi_2$ are not too big: it is worth noticing that this provided a genuinely asymmetric (non-quantitative) generalization of a result obtained in \cite{BosDamPapPP} for coupled linear oscillators. Other results, more global in nature but focusing on specific choices for the parameters $a_i, b_i$ or the forcing terms $p_i$, are given by Corollary \ref{cor-coeffnullo} and Corollary \ref{cor-noJo}. It seems that various other situations could be treated at the expenses of longer computations.

We finally mention that it should be possible, with the same approach, to consider also the more general case of resonance with respect to different curves of the Fucik spectrum, that is, $1/\sqrt{a_i} + 1/\sqrt{b_i} = 2/n_i$ with $n_i \in \mathbb{N}$. Also, the possibility of coupling more oscillators in a cyclic way $\phi_{i+1} = \phi_i$ could be considered. All these generalizations, however, seem to require substantial technical modifications of the proofs and they are thus postponed to future investigations.

\medbreak
\noindent
\textbf{Notation.} Throughout the paper, the symbol $\Vert \cdot \Vert$ will be used for the Euclidean norm of a vector in the plane. Also, for the index $i=1,2$, we will adopt the cyclic agreement $i+1=2$ for $i=1$.

\section{Coupled asymmetric oscillators: some preliminary estimates}\label{sec-2}

In this section, we perform some preliminary estimates for the solutions of system \eqref{eq-intro}, with the final goal of obtaining an asymptotic expansion for its Poincar\'e-map in action-angle coordinates (see Section \ref{subsec-poincare}).

From now on, as in the Introduction we will always assume that, for $i=1,2$, the positive real numbers $a_i, b_i$ satisfy \eqref{eq-reso}, the function $p_i: \mathbb{R} \to \mathbb{R}$ is continuous and $2\pi$-periodic and the function $\phi_i: \mathbb{R} \to \mathbb{R}$ is locally Lipschitz continuous and bounded. Furthermore, we also suppose that there exist
\begin{equation} \label{eq-limitiphii}
\lim_{x\to \pm \infty} \phi_i(x):=\phi_i(\pm \infty);
\end{equation}
moreover, without loss of generality, 
\begin{equation} \label{eq-rellimitiphii}
\phi_i(-\infty)=-\phi_i(+\infty).
\end{equation}

\subsection{Remarks on the asymmetric cosine and related functions}\label{subsec-ascos}

We collect here some results on various functions related to the asymmetric cosine function $C_i$, $i=1,2$, which is defined as the solution of 
\begin{equation}
\left\{\begin{array}{l}
\ddot{x}_i+a_ix_i^+-b_ix^-_i=0\vspace{7pt}\\
x_i(0)=1,\ \dot{x}_i(0)=0,
\end{array}
\right.
\end{equation}
with $a_i$ and $b_i$ as in \eqref{eq-reso}.
We recall that, for every $i=1,2$, the function $C_i$ is even, $\tau:=2\pi/n$-periodic and its explicit expression in $[-\tau/2,\tau/2]$ is
\begin{equation} \label{eq-defCi}
C_i(t)=\left\{\begin{array}{ll}
\cos \sqrt{a_i}t&\mbox{if } |t|\leq \pi/2\sqrt{a_i} \\
&\\
\displaystyle{-\sqrt{\dfrac{a_i}{b_i}}\sin \sqrt{b_i}\left(|t|-\dfrac{\pi}{2\sqrt{a_i}}\right)}&\mbox{if } \pi/2\sqrt{a_i}\leq |t|\leq \tau/2;
\end{array}
\right.
\end{equation}
For future reference, let us observe that $C_i$, $i=1, 2$, when $a_i\neq b_i$ admits the Fourier series expansion
\begin{equation} \label{eq-serieCi}
C_i(t)=\sum_{h=0}^{+\infty} c_{h,i} \cos hnt,\quad t\in \R,
\end{equation}
where
$$
c_{0,i} = \dfrac{2}{\tau}\, \dfrac{b_i-a_i}{b_i \sqrt{a_i}}
$$
and, for $h \geq 1$,
\begin{equation} \label{eq-coeffCi}
c_{h,i} = 
\left\{
\begin{array}{ll}
\dfrac{4}{\tau}\, \dfrac{b_i-a_i}{b_i-h^2 n^2}\, \dfrac{\sqrt{a_i}}{a_i-h^2 n^2}\, \cos \dfrac{hn\pi}{2\sqrt{a_i}} &  \text{if } a_i \neq h^2 n^2, \,b_i \neq h^2 n^2, \vspace{4pt}\\
\dfrac{1}{2h} & \text{otherwise}
\end{array}
\right.
\end{equation}
(see \cite[Lemma 4.2]{AloOrt98}). 

\smallskip
\noindent
In the next sections we will use the integrals of $C_i$ over the sets $J^\pm_{i+1}$ defined by
\begin{equation} \label{es-insiemij}
\begin{array}{l}
\displaystyle{J^+_{i+1}=\left\{t\in [0,2\pi]:\ C_{i+1}\left(\dfrac{\theta_{i+1}}{n}+t\right)>0\right\},\quad J^-_{i+1}=\left\{t\in [0,2\pi]:\ C_{i+1}\left(\dfrac{\theta_{i+1}}{n}+t\right)<0\right\}}
\end{array}
\end{equation}
where $\theta_{i+1}\in \R$ and we have used the notation $i+1=1$ for $i=2$. It is immediate to observe that the fact that $C_i$ and $C_{i+1}$ are both $2\pi/n$-periodic implies that
\[
\int_{J^{\pm}_{i+1}} C_{i}\left(\dfrac{\theta_{i}}{n}+t\right)\,dt, \quad \theta_i\in \R,
\]
does not change if we replace $[0,2\pi]$ in the definition of $J^{\pm}_{i+1}$ by any interval of lenght $2\pi$. In particular, in the computation of the integral of $C_{i}$ on $J_{i+1}^{+}$, we can replace $J^+_{i+1}$ by the set
\[
\bigcup_{k=0}^{n-1} \left(-\dfrac{\pi}{2\sqrt{a_i}}-\dfrac{\theta_{i+1}}{n}+2k\dfrac{\pi}{n}, \dfrac{\pi}{2\sqrt{a_i}}-\dfrac{\theta_{i+1}}{n}+2k\dfrac{\pi}{n}\right),
\]
thus obtaining that
\begin{equation} \label{eq-intCisuj+}
\int_{J^+_{i+1}} C_{i}\left(\dfrac{\theta_{i}}{n}+t\right)\,dt=  n\Lambda_i (\theta_i-\theta_{i+1}),\quad \forall \ \theta_i,\theta_{i+1}\in \R, \quad i=1,2,
\end{equation}
where  $\Lambda_i:\R \to \R$ is defined by
\begin{equation} \label{eq-deflambda}
\Lambda_i(t)=K_i\left(\dfrac{t}{n}+\dfrac{\pi}{2\sqrt{a_{i+1}}}\right)-K_i\left(\dfrac{t}{n}-\dfrac{\pi}{2\sqrt{a_{i+1}}}\right),\quad \forall \ t\in \R,
\end{equation}
being $K_i$ the primitive of $C_i$ such that $K_i(0)=0$.

\medskip
\noindent
A crucial point in our analysis will be the study ot the resolubility of the equation
\begin{equation} \label{eq-darisolvere}
\Lambda_i(t)=\alpha_i,
\end{equation}
where $\alpha_i$ is given by
\begin{equation} \label{eq-defalphai}
\alpha_i = \dfrac{1}{\sqrt{a_i}}-\dfrac{\sqrt{a_i}}{b_i},\quad i=1,2,
\end{equation}
which is is related to $C_i$ by
\begin{equation} \label{eq-intCi}
\int_0^{2\pi} C_{i}\left(t\right)\,dt=2n\alpha_i,\quad i=1,2.
\end{equation}
In particular, we will be interested in the situation where \eqref{eq-darisolvere} has simple solutions; in order to face this problem, let us first concentrate on the range of the function $\Lambda_i$. Introducing the function $\Sigma_i:\R \to \R$ defined by
\begin{equation} \label{eq-defsigma}
\Sigma_i(t)=n\Lambda'_i(t),\quad \forall \ t\in \R,
\end{equation}
is it possible to prove the following result.
\begin{lemma}\label{lem-propLambda}
	The function $\Lambda_{i}$ given in \eqref{eq-deflambda} is even, $2\pi$-periodic, decreasing in $(0,\pi)$ and increasing in $(-\pi,0)$.
\end{lemma}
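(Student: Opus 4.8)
The plan is to establish the stated properties of $\Lambda_i$ by working directly from its definition \eqref{eq-deflambda} in terms of the primitive $K_i$ of the asymmetric cosine $C_i$, exploiting the known symmetry and periodicity of $C_i$ together with the auxiliary function $\Sigma_i$ in \eqref{eq-defsigma}. First I would record the two basic facts about $C_i$: it is even and $\tau$-periodic with $\tau = 2\pi/n$. From evenness of $C_i$ it follows that its primitive $K_i$ with $K_i(0)=0$ is odd. Periodicity of $C_i$ together with \eqref{eq-intCi} gives $K_i(t+\tau) = K_i(t) + 2n\alpha_i$ — that is, $K_i$ is periodic up to the linear drift $2n\alpha_i$ per period. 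Substituting $s = t/n$ into \eqref{eq-deflambda}, the two shifts $\pm \pi/(2\sqrt{a_{i+1}})$ of the argument of $K_i$ differ by exactly one period $\tau$ of $C_i$ \emph{only} in the linear case; in general I would instead argue as follows.

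For the $2\pi$-periodicity of $\Lambda_i$: replacing $t$ by $t+2\pi$ shifts the argument $t/n$ of $K_i$ by $2\pi/n = \tau$, one full period of $C_i$. By the drift relation this adds $2n\alpha_i$ to each of the two terms $K_i(\cdot)$ in \eqref{eq-deflambda}, and the two contributions cancel in the difference; hence $\Lambda_i(t+2\pi) = \Lambda_i(t)$. For evenness: since $K_i$ is odd, $\Lambda_i(-t) = K_i(-t/n + \pi/(2\sqrt{a_{i+1}})) - K_i(-t/n - \pi/(2\sqrt{a_{i+1}})) = -K_i(t/n - \pi/(2\sqrt{a_{i+1}})) + K_i(t/n + \pi/(2\sqrt{a_{i+1}})) = \Lambda_i(t)$, using $K_i(-u) = -K_i(u)$ on each term; so $\Lambda_i$ is even.

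The monotonicity statement is the substantive part, and it is where I expect the main difficulty. Differentiating \eqref{eq-deflambda} gives $\Lambda_i'(t) = \tfrac1n\big( C_i(t/n + \pi/(2\sqrt{a_{i+1}})) - C_i(t/n - \pi/(2\sqrt{a_{i+1}})) \big)$, so by \eqref{eq-defsigma} the sign of $\Sigma_i(t) = n\Lambda_i'(t)$ is governed by comparing the values of $C_i$ at two points symmetric about $t/n$. Since $C_i$ is even and $\tau$-periodic, on a full period it is symmetric about each of the points $0$ and $\tau/2$, and — using the explicit formula \eqref{eq-defCi}, where $C_i$ equals a cosine arch on the central interval and a (downward) sine arch on the outer intervals — one checks that $C_i$ restricted to one period is strictly decreasing from its max at the even-multiples-of-$\tau$ points down to its min and back up (its graph on $[0,\tau]$ being unimodal with a single interior minimum). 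The key elementary lemma I would isolate is: for a function that is even and unimodal-per-period in this sense, the difference $C_i(s+h) - C_i(s-h)$ (with $h = \pi/(2\sqrt{a_{i+1}})$ fixed, $0 < 2h < \tau$) is negative precisely when $s$ lies in $(0,\tau/2)$ modulo $\tau$ and positive on $(\tau/2, \tau)$ — intuitively, pushing a fixed-width window to the right down a decreasing stretch lowers the right endpoint faster, and symmetrically on the increasing stretch. Translating back via $s = t/n$, the interval $(0,\tau/2)$ in $s$ corresponds to $(0,\pi)$ in $t$ and $(\tau/2,\tau)$ to $(-\pi,0)$ (mod $2\pi$), which yields exactly the claimed sign of $\Lambda_i'$, hence that $\Lambda_i$ is decreasing on $(0,\pi)$ and increasing on $(-\pi,0)$. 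The care required is in handling the non-smooth junction points of $C_i$ and in verifying that the strict sign holds on the whole open interval rather than merely weakly — this is best done by splitting into the sub-cases according to where $s+h$ and $s-h$ fall relative to the break points $\pm\pi/(2\sqrt{a_i})$ of \eqref{eq-defCi}, and comparing the relevant cosine/sine values directly; the constraint $1/\sqrt{a_i} + 1/\sqrt{b_i} = 2/n$ is what guarantees the arches fit together to make $C_i$ genuinely unimodal on a period.
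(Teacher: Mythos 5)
Your proposal is correct and reaches all four claims, but it handles the substantive part (monotonicity) by a somewhat different route than the paper. The paper works with $\Sigma_i(t)=n\Lambda_i'(t)=C_i^n\bigl(t+\tfrac{n\pi}{2\sqrt{a_{i+1}}}\bigr)-C_i^n\bigl(t-\tfrac{n\pi}{2\sqrt{a_{i+1}}}\bigr)$ and, using that $C_i^n$ is even, $2\pi$-periodic and strictly decreasing on $[0,\pi)$, characterizes the zero set of $\Sigma_i$: equality of the two values forces the arguments to be congruent or opposite modulo $2\pi$, the first alternative being excluded by $a_{i+1}>n^2/4$, so $\Sigma_i$ vanishes exactly at multiples of $\pi$; constant sign on $(0,\pi)$ then follows by continuity and is pinned down by evaluating at the single point $\pi-\tfrac{n\pi}{2\sqrt{a_{i+1}}}$. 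You instead propose to determine the sign of $C_i(s+h)-C_i(s-h)$ pointwise via a ``unimodal window'' lemma, with a case analysis at the junction points of \eqref{eq-defCi}. Both work, and they rest on the same structural facts about $C_i$; the paper's version avoids the case analysis entirely. Incidentally, so can yours: writing $C_i(x)=\tilde g(\mathrm{dist}(x,\tau\mathbb{Z}))$ with $\tilde g$ strictly decreasing on $[0,\tau/2]$, your window lemma reduces to $\cos\bigl(\tfrac{2\pi}{\tau}(s+h)\bigr)-\cos\bigl(\tfrac{2\pi}{\tau}(s-h)\bigr)=-2\sin\tfrac{2\pi s}{\tau}\sin\tfrac{2\pi h}{\tau}<0$, so the non-smoothness of $C_i$ at the break points is a non-issue and the worry you flag evaporates. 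Two small slips, neither fatal: the drift of $K_i$ over one period $\tau$ is $2\alpha_i$, not $2n\alpha_i$ (compare \eqref{eq-intCi}: the integral over $[0,2\pi]$, i.e.\ over $n$ periods, is $2n\alpha_i$) --- this does not matter since the drift cancels in the difference defining $\Lambda_i$; and the two shifts $\pm\pi/(2\sqrt{a_{i+1}})$ differ by $\pi/\sqrt{a_{i+1}}$, which equals half a period $\tau/2$ in the linear case $a_{i+1}=n^2$ and never a full period under $a_{i+1}>n^2/4$, so your parenthetical remark about the linear case is inaccurate but inconsequential.
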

\begin{proof} Let us first observe that we have
	\begin{equation} \label{eq-espressionesigma}
	\begin{array}{l}
\Sigma_i(t)=C_i\left(\dfrac{t}{n}+\dfrac{\pi}{2\sqrt{a_{i+1}}}\right)-C_i\left(\dfrac{t}{n}-\dfrac{\pi}{2\sqrt{a_{i+1}}}\right)=\\
\\
= C_i^n\left(t+\dfrac{n\pi}{2\sqrt{a_{i+1}}}\right)-C_i^{n}\left(t-\dfrac{n\pi}{2\sqrt{a_{i+1}}}\right),\quad \forall \ t\in \R,
\end{array}
	\end{equation}
	where $C_i^n:\R \to \R$ is defined by
	\[
	C^n_i(t)=C_i\left(\dfrac{t}{n}\right),\quad \forall \ t\in \R.
	\]
The function $C^n_i$ is continuous, $2\pi$-periodic, even and strictly decreasing in $[0,\pi)$; as a consequence, $\Sigma_i$ is continuous, $2\pi$-periodic and odd. As far as the sign of $\Sigma_i$ is concerned, let us observe that
$\Sigma_{i}(t)=0$ if and only if $ t=k\pi $ for some $k\in\Z$.
Indeed, $\Sigma_{i}(t)=0$ if and only if:
\[
\text{either } t + \frac{n\pi}{2\sqrt{a_{i+1}}} = t - \frac{n\pi}{2\sqrt{a_{i+1}}} +2k\pi \quad\text{or}\quad
t + \frac{n\pi}{2\sqrt{a_{i+1}}} = -t + \frac{n\pi}{2\sqrt{a_{i+1}}} +2k\pi
\quad\text{for some } k\in\Z.
\]
Now, since $a_{i+1}>n^{2}/4$, the first alternative cannot hold, and the second one implies that $t=k\pi$.
Therefore $\Sigma_{i}$ has constant sign in $(0,\pi)$ and a straightforward argument shows that
\[
\Sigma_{i}\left(\pi-\frac{n\pi}{2\sqrt{a_{i+1}}}\right) < 0.
\]
From the above described properties of $\Sigma_i$ we immediately deduce the thesis.
\end{proof}

\smallskip
\noindent
From now on, in order to simplify the notation, let us continue with the case $i=1$; the case $i=2$ is completely analogous.

\noindent
From Lemma \ref{lem-propLambda} we deduce that equation \eqref{eq-darisolvere}, with $i=1$, admits simple solutions if and only if 
\[
\Lambda_1(\pi)< \alpha_1 < \Lambda_1(0); 
\]
in general, the validity of this condition depends on the original pairs $(a_1,b_1)$ and $(a_2,b_2)$. Hence, let us define the resolubility set 
\begin{equation}\label{eq-defR}
\mathcal{R}=\left\{ (a_{1},a_{2})\in \left(\dfrac{n^2}{4},+\infty\right) \times \left(\dfrac{n^2}{4},+\infty\right)\subset \mathbb{R}^2 : \ \Lambda_1(\pi)<\alpha_1<\Lambda_1(0) \right\}.
\end{equation}
The complete description of the open set $\mathcal{R}$ is quite difficult; by means of long computations it is possible to show that the vertical sections $\mathcal{R}\cap \{(a_1^*,a_2):\ a_2>n^2/4\}$, with $a_1^*>n^2/4,\ a_1^*\neq n^2$, are bounded. On the other hand, the study of the horizontal sections $\mathcal{R}\cap \{(a_1,a_2^*):\ a_1>n^2/4\}$, with $a_2^*>n^2/4,\ a_2^*\neq n^2$, is much more complicated. However, the following simple result holds true.
\begin{lemma} \label{lem-risol}
The set $\mathcal{R}$ contains the half-lines $ \{(n^{2},a_{2}): a_{2}>n^{2}/4 \} $ and $\{(a_{1},n^{2}):a_{1}>n^{2}/4\}$. 
\end{lemma}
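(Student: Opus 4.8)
The plan is to treat the two half-lines separately, exploiting the fact that along each of them one of the two oscillators degenerates to the linear one, which drastically simplifies both $\Lambda_1$ and $\alpha_1$; recall that, by Lemma \ref{lem-propLambda}, membership in $\mathcal{R}$ amounts to saying that $\alpha_1$ lies strictly between the maximum $\Lambda_1(0)$ and the minimum $\Lambda_1(\pi)$ of $\Lambda_1$.

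Consider first the half-line $\{(n^{2},a_{2}):a_{2}>n^{2}/4\}$. Here $a_{1}=n^{2}$, hence $b_{1}=n^{2}$ by \eqref{eq-reso}, so that $C_{1}(t)=\cos(nt)$, its primitive with $K_{1}(0)=0$ is $K_{1}(s)=\tfrac1n\sin(ns)$, and $\alpha_{1}=\tfrac1n-\tfrac1n=0$. Substituting in \eqref{eq-deflambda} and using the identity $\sin A-\sin B=2\cos\tfrac{A+B}{2}\sin\tfrac{A-B}{2}$, one gets
\[
\Lambda_{1}(t)=\frac{2}{n}\,\Bigl(\sin\frac{n\pi}{2\sqrt{a_{2}}}\Bigr)\cos t .
\]
Since $a_{2}>n^{2}/4$ forces $\tfrac{n\pi}{2\sqrt{a_{2}}}\in(0,\pi)$, the coefficient $\sin\tfrac{n\pi}{2\sqrt{a_{2}}}$ is strictly positive, whence $\Lambda_{1}(\pi)=-\tfrac2n\sin\tfrac{n\pi}{2\sqrt{a_{2}}}<0=\alpha_{1}<\tfrac2n\sin\tfrac{n\pi}{2\sqrt{a_{2}}}=\Lambda_{1}(0)$, i.e.\ $(n^{2},a_{2})\in\mathcal{R}$.

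For the half-line $\{(a_{1},n^{2}):a_{1}>n^{2}/4\}$ the roles are reversed: now $a_{2}=b_{2}=n^{2}$, so the shift appearing in \eqref{eq-deflambda} equals $\tfrac{\pi}{2\sqrt{a_{2}}}=\tfrac{\pi}{2n}=\tfrac{\tau}{4}$, while $C_{1}$ (hence $K_{1}$, hence $\alpha_{1}=K_{1}(\tau/2)$, recall \eqref{eq-intCi}) is the one attached to the — possibly genuinely asymmetric — first oscillator. I plan to use two symmetries of $K_{1}$: it is odd (being the primitive of the even function $C_{1}$ vanishing at $0$), and $C_{1}$ is symmetric about $\tau/2$ (being even and $\tau$-periodic), which yields $K_{1}(\tfrac{\tau}{2}+s)=2\alpha_{1}-K_{1}(\tfrac{\tau}{2}-s)$. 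The first gives $\Lambda_{1}(0)=2K_{1}(\tau/4)$; the second gives $\Lambda_{1}(\pi)=K_{1}(\tfrac{3\tau}{4})-K_{1}(\tfrac{\tau}{4})=2\alpha_{1}-2K_{1}(\tfrac{\tau}{4})=2\alpha_{1}-\Lambda_{1}(0)$. Hence the two conditions $\Lambda_{1}(\pi)<\alpha_{1}<\Lambda_{1}(0)$ in \eqref{eq-defR} both collapse to the single scalar inequality $\alpha_{1}<2K_{1}(\tau/4)$. To prove the latter, write $\alpha_{1}=K_{1}(\tau/2)$, split $\int_{0}^{\tau/2}C_{1}$ at $\tau/4$ and change variables:
\[
2K_{1}\!\left(\tfrac{\tau}{4}\right)-\alpha_{1}=\int_{0}^{\tau/4}C_{1}(t)\,dt-\int_{\tau/4}^{\tau/2}C_{1}(t)\,dt=\int_{0}^{\tau/4}\Bigl[C_{1}(t)-C_{1}\bigl(t+\tfrac{\tau}{4}\bigr)\Bigr]\,dt .
\]
From the explicit expression \eqref{eq-defCi} one reads that $C_{1}$ is strictly decreasing on $[0,\tau/2]$ (it equals $\cos\sqrt{a_{1}}t$ on $[0,\pi/2\sqrt{a_{1}}]$ and a negative multiple of $\sin\sqrt{b_{1}}(\cdot-\pi/2\sqrt{a_{1}})$ on $[\pi/2\sqrt{a_{1}},\tau/2]$, with $\pi/2\sqrt{a_{1}}\in(0,\tau/2)$ precisely because $a_{1}>n^{2}/4$); hence for $t\in(0,\tau/4)$ we have $t<t+\tfrac{\tau}{4}\le\tfrac{\tau}{2}$ and therefore $C_{1}(t)>C_{1}(t+\tfrac{\tau}{4})$. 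The integrand being strictly positive, $2K_{1}(\tau/4)>\alpha_{1}$, so $(a_{1},n^{2})\in\mathcal{R}$ and the lemma follows.

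I do not expect a genuine analytic obstacle. The only point requiring care is the second step, i.e.\ the bookkeeping with the oddness of $K_{1}$ and the reflection symmetry of $C_{1}$ about $\tau/2$ that reduces the membership condition to the single inequality $\alpha_1<2K_1(\tau/4)$; once this is in place, the strict monotonicity of $C_{1}$ closes the argument in one line.
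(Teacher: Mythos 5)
Your proof is correct and follows essentially the same route as the paper's: the first half-line is handled identically, and for the second you derive the same key identity $\Lambda_1(\pi)=2\alpha_1-\Lambda_1(0)$, from which membership in $\mathcal{R}$ reduces to the single inequality $\alpha_1<\Lambda_1(0)$. The only (harmless) difference is that you establish this last strict inequality directly from the monotonicity of $C_1$ on $[0,\tau/2]$, whereas the paper obtains it by combining the identity with the strict decrease of $\Lambda_1$ on $(0,\pi)$ already proved in Lemma \ref{lem-propLambda}.
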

\begin{proof}
Let us first assume that $a_{1}=n^{2}$ (and, thus, $b_{1}=n^{2}$) and fix $\sqrt{a_{2}} > n/2 $; we then have
\begin{align*}
C_{1}(t)  = \cos(nt), \quad K_{1}(t)  = \frac{1}{n}\sin(nt), \quad \Lambda_{1}(t)  = \frac{2}{n} \sin\frac{\pi n}{2\sqrt{a_{2}}} \cos t, \quad \forall t \in \R.
\end{align*}
A simple computation proves that $\Lambda_{1}(\pi) < 0 < \Lambda_{1}(0) $; noticing that $\alpha_{1}=0$, by \eqref{eq-defalphai}, this shows that $(n^2,a_2)\in \mathcal{R}$.

\medskip
\noindent
On the other hand, if $\alpha_{2}=n^{2}$, we have
\begin{align*}
\Lambda_{1}(0) = 2\int_{0}^{\pi/2n} C_{1}(t)dt,\quad 
\Lambda_{1}(\pi) = 2\int_{\pi/2n}^{\pi/n} C_{1}(t)dt.
\end{align*}
Recalling \eqref{eq-intCi}, we deduce that $\Lambda_{1}(\pi) = 2\alpha_{1} - \Lambda_{1}(0) < \Lambda_{1}(0) $
and, thus, $\alpha_{1} < \Lambda_{1}(0)$. From these relations we also obtain $\Lambda_{1}(\pi) = 2\alpha_{1} - \Lambda_{1}(0)< \alpha_{1}$, proving that $(a_1,n^2)\in \mathcal{R}$.
\end{proof}

\subsection{Asymptotic analysis}\label{subsec-poincare}

We now perform an asymptotic expansion of the Poincar\'e map associated to \eqref{eq-intro}.
We adapt the argument of the proof of \cite[Theorem~4.1]{AloOrt98} to our case: we
write \eqref{eq-intro} as a first order system in $(x_1,x_2,y_1,y_2)=(x_1,x_2,\dot{x}_{1},\dot{x}_{2})$ and use the change of variables
\begin{equation}\label{eq-polarmod}
\left\{\begin{aligned}
x_i & = \gamma_i r_i C_i\left(\dfrac{\theta_{i}}{n}\right)\\
y_i & = \gamma_i r_i S_i\left(\dfrac{\theta_{i}}{n}\right)
\end{aligned}
\right.
\quad\text{with } \gamma_i=\sqrt{2n/a_i},
\end{equation}
where the functions $C_i$ and $S_i$ are defined in Subsection~\ref{subsec-ascos}.

It is straightforward to see that \eqref{eq-intro} is formally equivalent to
\begin{equation} \label{thetaI}
\left\{\begin{aligned}                       
\dot{\theta}_{1} & = n-\dfrac{\gamma_1}{2r_{1}}C_1\left(\dfrac{\theta_{1}}{n}\right)
\left[p_1(t)-\phi_1\left(\gamma_2r_{2}C_2\left(\dfrac{\theta_{2}}{n}\right)\right)\right]
\\
\dot{\theta}_{2} &= n-\dfrac{\gamma_2}{2r_{2}}C_2\left(\dfrac{\theta_{2}}{n}\right) \left[p_2(t)-\phi_2\left(\gamma_1r_{1}C_1\left(\dfrac{\theta_{1}}{n}\right)\right)\right]
\\
\dot{r}_{1} & = \dfrac{\gamma_1}{2n} S_1\left(\dfrac{\theta_{1}}{n}\right)
\left[p_1(t)-\phi_1\left(\gamma_2r_{2}C_2\left(\dfrac{\theta_{2}}{n}\right)\right)\right]
\\
\dot{r}_{2} & = \dfrac{\gamma_2}{2n} S_2\left(\dfrac{\theta_{2}}{n}\right) \left[p_2(t)-\phi_2\left(\gamma_1r_{1}C_1\left(\dfrac{\theta_{1}}{n}\right)\right)\right]
\end{aligned}
\right.
\end{equation}
We denote by $(\theta_1,\theta_2,r_1,r_2)$ the solution of \eqref{thetaI} satisfying $(\theta_1,\theta_2,r_1,r_2)(0)=(\theta_{1,0},\theta_{2,0},r_{1,0},r_{2,0})$ and study the behavior of
$(\theta_1,\theta_2,r_1,r_2)(2\pi)$ as $ \min\{r_{1,0},r_{2,0}\}\to +\infty $.
We also set $ \theta_{0} = (\theta_{1,0},\theta_{2,0}) $, $ r_{0} = (r_{1,0},r_{2,0}) $ and
remark that $ \theta_{0} \in \R^{2} $ and $ r_{1,0}, r_{2,0} > 0 $.

The boundedness of $p_i$ and $\phi_i$ implies that $\dot{r}_{i}$ is uniformly bounded and,
hence, we have
\begin{equation}\label{eq-stimaI}
r_{i}=r_{i,0}+O(1) \qquad\text{and}\qquad
r_{i}^{-1}  = r_{i,0}^{-1}+O(r_{i,0}^{-2}) \qquad \text{as } r_{i,0}\to +\infty,
\end{equation}
where these and all the following estimates hold uniformly w.r.t. $t\in[0,2\pi]$, $\theta_{1,0}$, $\theta_{2,0}$ and $r_{i+1,0}$. 
We deduce that 
\begin{equation}\label{eq-thetapunto}
\begin{aligned}
\dot{\theta}_i & = n-\dfrac{\gamma_i}{2} C_i\left(\dfrac{\theta_{i}}{n}\right) \left(\dfrac{1}{r_{i,0}}+O(r_{i,0}^{-2})\right) \left[p_i(t)- \phi_i\left(\gamma_{i+1} r_{i+1}C_{i+1}\left(\dfrac{\theta_{i+1}}{n}\right)\right)\right] \\
& = n-\dfrac{\gamma_i}{2 r_{i,0}} C_i\left(\dfrac{\theta_{i}}{n}\right)
\left[p_i(t)- \phi_i\left(\gamma_{i+1} r_{i+1}C_{i+1}\left(\dfrac{\theta_{i+1}}{n}\right)\right)\right]+O(r_{i,0}^{-2}),
\end{aligned}
\qquad \text{as } r_{i,0}\to +\infty.
\end{equation}
This relation implies that
\[
\dfrac{\theta_{i}}{n}=\dfrac{\theta_{i,0}}{n}+t+O(r_{i,0}^{-1}),\qquad \text{as } r_{i,0}\to +\infty;
\]
and, thus:
\begin{equation} \label{proof1}
\begin{aligned}
C_i\left(\dfrac{\theta_{i}}{n}\right) & = C_i\left(\dfrac{\theta_{i,0}}{n}+t\right)+O(r_{i,0}^{-1}) \\
S_i\left(\dfrac{\theta_{i}}{n}\right) & = S_i\left(\dfrac{\theta_{i,0}}{n}+t\right)+O(r_{i,0}^{-1})
\end{aligned}
\qquad \text{as } r_{i,0}\to +\infty,
\end{equation}
since $C_{i}$ and $S_{i}$ are smooth enough.
By replacing \eqref{proof1} in the last two equations of \eqref{thetaI} we get
\[
\dot{r}_{i} = 
\dfrac{\gamma_i}{2n} S_i\left(\dfrac{\theta_{i,0}}{n}+t\right)
\left[ p_i(t)-
\phi_i\left(\gamma_{i+1}r_{i+1}C_{i+1}\left(\frac{\theta_{i+1}}{n}\right)\right)\right]
+O(r_{i,0}^{-1})
\qquad \text{as } r_{i,0}\to +\infty.
\]
As a consequence, we infer that
\begin{equation} \label{proof2pre}
\begin{aligned}
r_i(2\pi) & =  r_{i,0}
+\dfrac{\gamma_i}{2n}\int_0^{2\pi}  S_i\left(\dfrac{\theta_{i,0}}{n}+t\right) p_i(t) \,dt \\
& \hphantom{=} -\dfrac{\gamma_i}{2n} \int_0^{2\pi} S_i\left(\dfrac{\theta_{i,0}}{n}+t\right)
\phi_i\left(\gamma_{i+1}r_{i+1}C_{i+1}\left(\frac{\theta_{i+1}}{n}\right)\right)dt
+F_{i,i}(\theta_0,r_0)
\end{aligned}
\end{equation}
where
\[
\lim_{r_{i,0}\to +\infty} F_{i,i}(\theta_0,r_0) = 0 \quad \text{uniformly w.r.t. } \theta_{1}, \theta_{2}, \text{ and } r_{i+1,0}.
\]
Now, we deduce from \eqref{proof1} that
$ C_{i+1}(\theta_{i+1}(t)/n) \to C_{i+1}(\theta_{i+1,0}/n+t) $ uniformly w.r.t. $ t\in[0,2\pi]$, $\theta_{1,0}$, $\theta_{2,0}$ and $r_{i,0}$, as $ r_{i+1,0} \to +\infty$ and, setting
\[
\begin{aligned}
& J^{+}_{i+1,0}=\left\{t\in [0,2\pi]:\ C_{i+1}\left(\frac{\theta_{i+1,0}}{n}+t\right)>0\right\}
\\
& J^{-}_{i+1,0}=\left\{t\in [0,2\pi]:\ C_{i+1}\left(\frac{\theta_{i+1,0}}{n}+t\right)<0\right\},
\end{aligned}
\]
we have that
\[
t\in J^{\pm}_{i+1,0} \implies \lim_{r_{i+1,0}\to+\infty} \phi_i\left(\gamma_{i+1}r_{i+1}C_{i+1}\left(\frac{\theta_{i+1}}{n}\right)\right)
= \phi_{i}(\pm\infty).
\]
where these two limits are not uniform w.r.t. $t\in[0,2\pi]$.
However, using that $ \phi_{i} $ is bounded and $ C_{i+1}(\theta_{i+1}(t)/n) $ converges
uniformly, it is possible to show that:
\[
\lim_{r_{i+1,0}\to +\infty}\int_{J^{\pm}_{i+1,0}} S_i\left(\dfrac{\theta_{i,0}}{n}+t\right)
\left[ \phi_{i}(\pm\infty) - \phi_i\left(\gamma_{i+1}r_{i+1}C_{i+1}\left(\frac{\theta_{i+1}}{n}\right)\right) \right] dt =0
\]
uniformly w.r.t. $\theta_{1,0}$, $\theta_{2,0}$ and $r_{i,0}$.
Therefore, we can write equation \eqref{proof2pre} in the following way:
\begin{equation}\label{proof2}
\begin{aligned}
r_i(2\pi) & = r_{i,0}
+\dfrac{\gamma_i}{2n}\int_0^{2\pi}  S_i\left(\dfrac{\theta_{i,0}}{n}+t\right) p_i(t) \,dt \\
&\hphantom{=} -\dfrac{\gamma_i}{2n} \left(\phi_i(+\infty)
\int_{J^+_{i+1,0}} S_i\left(\dfrac{\theta_{i,0}}{n}+t\right) \,dt 
+\phi_i(-\infty) \int_{J^-_{i+1,0}} S_i\left(\dfrac{\theta_{i,0}}{n}+t\right) \,dt\right)
\\
&\hphantom{=} +F_{i,i}(\theta_0,r_0)+F_{i,i+1}(\theta_0,r_0),
\end{aligned}
\end{equation}
where also
\begin{equation} \label{proof3}
\lim_{r_{i+1,0}\to +\infty} F_{i,i+1}(\theta_0,r_0)=0
\qquad \text{uniformly w.r.t. } \theta_{1,0}, \theta_{2,0} \text{ and } r_{i,0}.
\end{equation}
We now substitute \eqref{eq-stimaI} and \eqref{proof1} in \eqref{eq-thetapunto}, obtaining
\[
\dot{\theta}_i =  n-\dfrac{\gamma_i}{2r_{i,0}} C_i\left(\dfrac{\theta_{i,0}}{n} + t \right) 
\left[p_i(t)- \phi_i\left(\gamma_{i+1} r_{i+1}C_{i+1}\left(\dfrac{\theta_{i+1}}{n}\right)\right)\right] + O(r_{i,0}^{-2}),
\qquad \text{as } r_{i,0}\to +\infty.
\]
Integrating on $[0,2\pi]$ and making similar considerations as done for $ r_{i}(2\pi) $, we deduce that
\begin{equation} \label{proof4}
\begin{aligned}
\theta_i(2\pi) & = \theta_{i,0}+2n\pi -
\dfrac{\gamma_i}{2r_{i,0}} \int_0^{2\pi} C_i\left(\dfrac{\theta_{i,0}}{n} + t \right)p_i(t)\,dt \\
& \hphantom{=} + \dfrac{\gamma_i}{2r_{i,0}} \left(
\phi_i(+\infty) \int_{J^+_{i+1,0}} C_i\left(\dfrac{\theta_{i,0}}{n} + t \right) \,dt 
+\phi_i(-\infty) \int_{J^-_{i+1,0}} C_i\left(\dfrac{\theta_{i,0}}{n} + t \right) \,dt\right)
\\
& \hphantom{=} +\dfrac{1}{r_{i,0}} \left(G_{i,i}(\theta_0,r_0)+G_{i,i+1}(\theta_0,r_0)\right),
\end{aligned}
\end{equation}
where
\begin{equation} \label{proof5}
\lim_{r_{i,0}\to +\infty} G_{i,i}(\theta_0,r_0)
=\lim_{r_{i+1,0}\to +\infty} G_{i,i+1}(\theta_0,r_0)=0
\end{equation}
uniformly w.r.t. the other variables.

\noindent
Recalling \eqref{eq-rellimitiphii}, we observe that \eqref{eq-intCisuj+} and \eqref{eq-intCi} imply that $\theta_{i}(2\pi)$, $i=1,2$, can be written as
\[
\begin{aligned}
\theta_i(2\pi) & = \theta_{i,0}+2n\pi -
\dfrac{\gamma_i}{2r_{i,0}} \int_0^{2\pi} C_i\left(\dfrac{\theta_{i,0}}{n} + t \right)p_i(t)\,dt
 + \frac{1}{r_{i,0}} \gamma_{i}n\phi_{i}(+\infty)(\Lambda_{i}(\theta_{i,0}-\theta_{i+1,0})-\alpha_{i})  
\\
& \hphantom{=} +\dfrac{1}{r_{i,0}} \left(G_{i,i}(\theta_0,r_0)+G_{i,i+1}(\theta_0,r_0)\right),
\end{aligned}
\]
where $\alpha_i, \Lambda_i$ are defined in \eqref{eq-defalphai}, \eqref{eq-deflambda}.

\noindent
For $i=1, 2$, let us now denote
\begin{equation} \label{proof6}
\begin{aligned}
\Phi_i(\theta_{i,0}) & = -\dfrac{\gamma_i}{2}\int_0^{2\pi}  C_i\left(\dfrac{\theta_{i,0}}{n}+t\right)p_i(t)\,dt,
\\
L_i(\theta_{0}) & =\Phi_i(\theta_{i,0})
+\gamma_{i}n\phi_{i}(+\infty)(\Lambda_{i}(\theta_{i,0}-\theta_{i+1,0})-\alpha_{i}),
\end{aligned}
\end{equation}
for every $\theta_0 =(\theta_{1,0},\theta_{2,0} )\in \R^{2} $.
Then, we can summarize \eqref{proof2}, \eqref{proof3}, \eqref{proof4} and \eqref{proof5} as follows:
\begin{equation} \label{sistema-asintotico}
\left\{\begin{aligned}
{\theta_i}(2\pi) & = \theta_{i,0}+2\pi n+\dfrac{1}{r_{i,0}} \left[ L_i(\theta_0)+G_{i,i}(\theta_0,r_0)+G_{i,i+1}(\theta_0,r_0)\right]
\\
r_i(2\pi) & = r_{i,0} -\dfrac{\partial L_i}{\partial \theta_{i,0}} (\theta)+F_{i,i}(\theta_0,r_0)+F_{i,i+1}(\theta_0,r_0)
\end{aligned}
\right.
\qquad \text{for } i=1,2,
\end{equation}
where
\begin{equation} \label{stime-resti}
\begin{aligned}
& \lim_{r_{i,0}\to +\infty} F_{i,i}(\theta_0,r_0)
= \lim_{r_{i,0}\to +\infty} G_{i,i}(\theta_0,r_0) = 0
& &\text{uniformly w.r.t. } r_{i+1,0} \text{ and } \theta_{0}, 
\\
& \lim_{r_{i+1,0}\to +\infty} F_{i,i+1}(\theta_0,r_0)
=\lim_{r_{i+1,0}\to +\infty}  G_{i,i+1}(\theta_0,I_0)=0
& & \text{uniformly w.r.t. } r_{i,0} \text{ and } \theta_{0}.
\end{aligned}
\end{equation}
The functions $L_1, L_2$ will be meant as the components of the vector valued function
\begin{equation}\label{eq-defL}
L(\theta_0) = (L_1(\theta_{0}),L_2(\theta_{0})), \qquad \theta_0 =(\theta_{1,0},\theta_{2,0} )\in \R^{2}.
\end{equation}
which we will call \emph{resonance function} for system \eqref{eq-intro}.
Notice that, due to the $2\pi$-periodicity in both the variables, we can interpret $L$ as a function defined on the two-dimensional torus 
$\mathbb{T}^{2} = \R^{2} / (2\pi\mathbb{Z})^{2}$. This function will play a crucial role in the statement of our main result (see Section \ref{sec-4}).

%
\section{Dynamics of discrete maps}\label{sec-3}

In this section, we establish the abstract result that will be used to prove the existence of unbounded solutions to system \eqref{eq-intro}.
%
\subsection{$\mathcal{D}^\pm$-matrices}
We consider $2\times 2$-matrices $A=(a_{ij})$, $i,j=1, 2$.
\begin{Definition} \label{def-dpiumeno}
	A $2\times 2$-matrix $A$ is said to be a $\mathcal{D}^+$-matrix if 
	\begin{equation} \label{eq-dipiu}
		a_{11}<0,\qquad a_{22}<0, \qquad |a_{12}a_{22}+a_{11}a_{21}|<2a_{11}a_{22}.
	\end{equation}
	Analogously, a $2\times 2$-matrix $A$ is said to be a $\mathcal{D}^-$-matrix if 
	\begin{equation} \label{eq-dimeno}
	a_{11}>0,\qquad a_{22}>0, \qquad |a_{12}a_{22}+a_{11}a_{21}|<2a_{11}a_{22}.
	\end{equation}
\end{Definition}

Notice that a diagonal matrix with negative entries (resp., positive entries) is a $\mathcal{D}^+$ matrix (resp., $\mathcal{D}^-$ matrix). 
Given a $\mathcal{D}^\pm$-matrix $A$ and $\epsilon =(\epsilon_1,\epsilon_2)\in (0,+\infty)^2$, let us define
\begin{equation} \label{eq-matriceb}
B_{\epsilon} =\begin{pmatrix}
1+\epsilon_1 a_{11}&\epsilon_1 a_{12}\\
&\\
\epsilon_2 a_{21}&1+\epsilon_2 a_{22}
\end{pmatrix}.
\end{equation}
Moreover, for every $\epsilon_0>0$ and $\eta>0$ let us define
\begin{equation} \label{eq-defcono}
C_{\epsilon_0,\eta}=\left\{\epsilon=(\epsilon_1,\epsilon_2)\in (0,+\infty)^2:\ \dfrac{a_{11}}{a_{22}}-\eta \leq \dfrac{\epsilon_2}{\epsilon_1}\leq \dfrac{a_{11}}{a_{22}}+\eta ,\quad \|\epsilon\|\leq \epsilon_0\right\}.
\end{equation}
We prove the following result.

\begin{lemma} \label{lem-matrici}
	Assume that $A$ is a $\mathcal{D}^\pm$-matrix. Then, there exist $a_0>0$, $\epsilon_0>0$ and $\eta>0$ such that
	\begin{equation} \label{eq-spettro}
\|B_{\epsilon}\|_{2} \leq 1-\dfrac{1}{2}a_0\|\epsilon\|,\quad \forall \ \epsilon \in C_{\epsilon_0,\eta}.
	\end{equation}
\end{lemma}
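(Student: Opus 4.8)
The plan is to estimate $\|B_\epsilon\|_2$ through the largest eigenvalue of the symmetric positive semidefinite matrix $B_\epsilon^\top B_\epsilon$, since $\|B_\epsilon\|_2^2 = \lambda_{\max}(B_\epsilon^\top B_\epsilon)$. Writing $B_\epsilon = I + M_\epsilon$ with
\[
M_\epsilon = \begin{pmatrix} \epsilon_1 a_{11} & \epsilon_1 a_{12} \\ \epsilon_2 a_{21} & \epsilon_2 a_{22} \end{pmatrix},
\]
we have $B_\epsilon^\top B_\epsilon = I + (M_\epsilon + M_\epsilon^\top) + M_\epsilon^\top M_\epsilon$. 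The term $M_\epsilon^\top M_\epsilon$ is $O(\|\epsilon\|^2)$, so to leading order in $\|\epsilon\|$ the spectrum of $B_\epsilon^\top B_\epsilon$ is governed by the symmetric matrix $I + S_\epsilon$, where $S_\epsilon = M_\epsilon + M_\epsilon^\top$ has entries
\[
S_\epsilon = \begin{pmatrix} 2\epsilon_1 a_{11} & \epsilon_1 a_{12} + \epsilon_2 a_{21} \\ \epsilon_1 a_{12} + \epsilon_2 a_{21} & 2\epsilon_2 a_{22} \end{pmatrix}.
\]
The key point is to show that on the cone $C_{\epsilon_0,\eta}$ the matrix $S_\epsilon$ is negative definite with its largest eigenvalue bounded above by $-a_0\|\epsilon\|$ for a suitable $a_0>0$; the quadratic correction is then absorbed by shrinking $\epsilon_0$.

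First I would treat the $\mathcal{D}^+$ case (the $\mathcal{D}^-$ case is symmetric, replacing $\epsilon$-directions appropriately, or can be reduced to it by sign considerations — I will remark on this at the end). On the ray $\epsilon_2/\epsilon_1 = a_{11}/a_{22}$, write $\epsilon_1 = s$, $\epsilon_2 = s\, a_{11}/a_{22}$ with $s>0$. Then the diagonal entries of $S_\epsilon$ become $2s a_{11}$ and $2s a_{11}$, i.e. they coincide, and the off-diagonal entry becomes $s(a_{12} + a_{21} a_{11}/a_{22}) = \tfrac{s}{a_{22}}(a_{12} a_{22} + a_{11} a_{21})$. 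Hence on this ray
\[
\lambda_{\max}(S_\epsilon) = 2 s a_{11} + \frac{s}{|a_{22}|}\,|a_{12} a_{22} + a_{11} a_{21}|
= \frac{s}{|a_{22}|}\Bigl( 2 a_{11} |a_{22}| + |a_{12} a_{22} + a_{11} a_{21}| \Bigr).
\]
Since $a_{11}<0$, $a_{22}<0$, we have $2 a_{11}|a_{22}| = -2 a_{11} a_{22}<0$, so the $\mathcal{D}^+$ condition $|a_{12}a_{22}+a_{11}a_{21}| < 2 a_{11} a_{22} = 2 a_{11}|a_{22}|\cdot(-1)$... — more precisely $|a_{12}a_{22}+a_{11}a_{21}| < 2 a_{11}a_{22}$ with $a_{11}a_{22}>0$ gives $2a_{11}|a_{22}| + |a_{12}a_{22}+a_{11}a_{21}| = -2a_{11}a_{22} + |a_{12}a_{22}+a_{11}a_{21}| < 0$. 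So on the central ray $\lambda_{\max}(S_\epsilon) \le -c_0 s$ for some $c_0 > 0$, and since $\|\epsilon\| = s\sqrt{1 + (a_{11}/a_{22})^2}$ is comparable to $s$, we get $\lambda_{\max}(S_\epsilon) \le -a_0' \|\epsilon\|$ on the central ray.

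Next I would extend this to a neighborhood: the map $(\epsilon,\text{unit direction}) \mapsto \lambda_{\max}(S_\epsilon/\|\epsilon\|)$ is continuous (eigenvalues of a symmetric matrix depend continuously on entries, and $S_\epsilon/\|\epsilon\|$ depends only on the direction $\epsilon/\|\epsilon\|$), and on the compact set of directions with $\epsilon_2/\epsilon_1 = a_{11}/a_{22}$ it is strictly negative, bounded above by some $-a_0' < 0$. By continuity and compactness there is $\eta>0$ such that $\lambda_{\max}(S_\epsilon/\|\epsilon\|) \le -\tfrac{3}{4}a_0'$ for all directions in the cone $a_{11}/a_{22} - \eta \le \epsilon_2/\epsilon_1 \le a_{11}/a_{22} + \eta$. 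Thus $\lambda_{\max}(S_\epsilon) \le -\tfrac{3}{4}a_0'\|\epsilon\|$ on $C_{\epsilon_0,\eta}$ for every $\epsilon_0$. Finally, using $\lambda_{\max}(B_\epsilon^\top B_\epsilon) \le 1 + \lambda_{\max}(S_\epsilon) + \|M_\epsilon^\top M_\epsilon\|_2 \le 1 - \tfrac34 a_0'\|\epsilon\| + K\|\epsilon\|^2$ with $K = \|A\|_2^2$ (say), choose $\epsilon_0$ so small that $K\|\epsilon\| \le \tfrac14 a_0'$ on $C_{\epsilon_0,\eta}$, giving $\|B_\epsilon\|_2^2 \le 1 - \tfrac12 a_0'\|\epsilon\| \le (1 - \tfrac14 a_0'\|\epsilon\|)^2$ for $\|\epsilon\|$ small (using $(1-x)^2 \ge 1-2x$). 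Setting $a_0 = \tfrac12 a_0'$ and shrinking $\epsilon_0$ further if needed yields $\|B_\epsilon\|_2 \le 1 - \tfrac12 a_0\|\epsilon\|$, as claimed. The $\mathcal{D}^-$ case is handled identically: now $a_{11},a_{22}>0$, the diagonal of $S_\epsilon$ is positive, but the relevant cone is a different direction — actually the hypothesis structure is symmetric under $A \mapsto -A$ combined with examining $B_\epsilon = I + M_\epsilon$ versus $I - M_{-\epsilon}$; I would simply repeat the computation with all signs reversed, noting that the ratio $a_{11}/a_{22}>0$ is still positive so $C_{\epsilon_0,\eta}$ still lies in the open first quadrant.

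The main obstacle is the passage from the single central ray to the full cone in a way that keeps the bound linear in $\|\epsilon\|$ uniformly, i.e. making precise the compactness/continuity argument, and separately tracking that the quadratic remainder $\|M_\epsilon^\top M_\epsilon\|_2$ is genuinely $O(\|\epsilon\|^2)$ with a constant independent of the direction (this is immediate since $\|M_\epsilon\|_2 \le \|A\|_2\|\epsilon\|$). A secondary subtlety is to double-check that on the central ray the diagonal entries of $S_\epsilon$ really coincide — this is exactly the point of defining the cone around the slope $a_{11}/a_{22}$, and it is what makes $\lambda_{\max}$ of the $2\times 2$ symmetric matrix equal to (common diagonal) $+$ (absolute value of off-diagonal), which is where the $\mathcal{D}^\pm$ inequality $|a_{12}a_{22}+a_{11}a_{21}| < 2a_{11}a_{22}$ enters decisively.
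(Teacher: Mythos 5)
Your proof is correct and follows essentially the same route as the paper: your $\lambda_{\max}(M_\epsilon+M_\epsilon^\top)$ is exactly the paper's first-order term $g(\epsilon)=a_{11}\epsilon_1+a_{22}\epsilon_2+\sqrt{(a_{11}\epsilon_1-a_{22}\epsilon_2)^2+(a_{12}\epsilon_1+a_{21}\epsilon_2)^2}$, evaluated on the same ray $\epsilon_2/\epsilon_1=a_{11}/a_{22}$ and made strictly negative by the same $\mathcal{D}^+$ inequality, with the $O(\|\epsilon\|^2)$ remainder absorbed in the same way (you via Weyl's inequality, the paper via a direct expansion of the trace--determinant formula for the top eigenvalue). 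One caveat: your closing treatment of the $\mathcal{D}^-$ case is as loose as the paper's own ``the other case is analogous''---as literally stated the bound $\|B_\epsilon\|_2<1$ cannot hold there (take $A$ diagonal with positive entries, so that $\|B_\epsilon\|_2=1+\max_i\epsilon_i a_{ii}>1$), and what is actually needed for Remark~\ref{rem-invarianteneg2} is the corresponding contraction estimate for $B_\epsilon^{-1}$.
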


\begin{proof} We give the proof in the case of $\mathcal{D}^+$-matrix; the other case is analogous. We recall that the matrix norm $\|B_{\epsilon}\|_{2}$ coincides with the square root of the maximum eigenvalue of the matrix $C_\epsilon=B_\epsilon^T\, B_\epsilon$.
	
	\noindent
	Let us first observe that, for every $\epsilon$, the elements on the diagonal of $C_\epsilon$ are given by
	\[
	(1+\epsilon_1 a_{11})^2+\epsilon_2^2 a_{21}^2,\quad (1+\epsilon_2 a_{22})^2+\epsilon_1^2 a_{12}^2;
	\]
	hence, we have
	\begin{equation} \label{eq-traccia}
	{\rm tr} (C_\epsilon)=2+2(a_{11}\epsilon_1+a_{22}\epsilon_2)+(a_{11}^2+a_{12}^2)\epsilon_1^2+(a_{22}^2+a_{21}^2)\epsilon_2^2.
	\end{equation}
	Hence, a simple computation shows that
	\begin{equation} \label{eq-tracciaquadrato}
	\begin{array}{ll}
	\left({\rm tr} (C_\epsilon)\right)^2=&
	4+8(a_{11}\epsilon_1+a_{22}\epsilon_2)+4(a_{11}\epsilon_1+a_{22}\epsilon_2)^2+4(a_{11}^2+a_{12}^2)\epsilon_1^2+4(a_{22}^2+a_{21}^2)\epsilon_2^2+\\
	&\\
	&+4(a_{11}\epsilon_1+a_{22}\epsilon_2)((a_{11}^2+a_{12}^2)\epsilon_1^2+(a_{22}^2+a_{21}^2)\epsilon_2^2)+((a_{11}^2+a_{12}^2)\epsilon_1^2+(a_{22}^2+a_{21}^2)\epsilon_2^2)^2.
	\end{array}
	\end{equation}
	
	\noindent
	On the other hand, we have
	\begin{equation} \label{eq-determinante}
	\begin{array}{ll}
	{\det} (C_\epsilon)=(\det B_\epsilon)^2=& 1+ 2(a_{11}\epsilon_1+a_{22}\epsilon_2)+(a_{11}\epsilon_1+a_{22}\epsilon_2)^2+2\Delta \epsilon_1 \epsilon_2+\\
	&\\
	&+2(a_{11}\epsilon_1+a_{22}\epsilon_2)\Delta \epsilon_1 \epsilon_2+\Delta^2 \epsilon_1^2 \epsilon_2^2,
	\end{array}
	\end{equation}
	where $\Delta=a_{11}a_{22}-a_{12}a_{21}$. 
	
	\noindent
	Now, let us observe that the matrix $C_\epsilon$ is positive definite; as a consequence, the maximum eigenvalue of $C_\epsilon$ is given by
	\begin{equation} \label{eq-autmax}
	\lambda_+(\epsilon)=\dfrac{{\rm tr}(C_\epsilon)+\sqrt{({\rm tr}(C_\epsilon))^2-4\det (C_\epsilon)}}{2}.
	\end{equation}
	From \eqref{eq-tracciaquadrato} and \eqref{eq-determinante}, by means of simple computations we infer that
	\begin{equation} \label{eq-radice}
	\sqrt{({\rm tr}(C_\epsilon))^2-4\det (C_\epsilon)} =2
\sqrt{ (1+a_{11}\epsilon_1 +a_{22}\epsilon_2)d_2(\epsilon) +P_4(\epsilon)},
	\end{equation}
	where 
	\begin{equation} \label{eq-sottoradice}
	d_2(\epsilon)=(a_{11}^2+a_{12}^2)\epsilon_1^2-2\Delta \epsilon_1 \epsilon_2+(a_{21}^2+a_{22}^2)\epsilon_2^2 
= (a_{11}\epsilon_{1}-a_{22}\epsilon_{2})^{2}+(a_{12}\epsilon_{1}+a_{21}\epsilon_{2})^{2}
	\end{equation}
	and
    \begin{equation} \label{eq-sottoradice2}
	\begin{aligned}
	P_4(\epsilon) & = \dfrac{1}{4}\left[(a_{11}^2+a_{12}^2)\epsilon_1^2+(a_{22}^2+a_{21}^2)\epsilon_2^2\right]^2
                       -\Delta^2 \epsilon_1^2 \epsilon_2^2.
\\
   & =  \dfrac{1}{4} d_{2}(\epsilon)\left[(a_{11}^2+a_{12}^2)\epsilon_1^2+(a_{22}^2+a_{21}^2)\epsilon_2^2
                       +2\Delta \epsilon_1 \epsilon_2\right]
\\
   & = d_{2}(\epsilon) \cdot O(\|\epsilon\|^{2}) \qquad \text{as } \epsilon \to 0.
	\end{aligned}
    \end{equation}
Using \eqref{eq-traccia} and \eqref{eq-autmax}--\eqref{eq-sottoradice2}, we can estimate:
	\begin{equation} \label{eq-autmax3}
    \begin{aligned}
	\lambda_+(\epsilon) & = 1 + a_{11}\epsilon_1+a_{22}\epsilon_2+O(\|\epsilon\|^{2})
                            + \sqrt{d_2(\epsilon)} \, \sqrt{1+a_{11}\epsilon_1 +a_{22}\epsilon_2 +O(\|\epsilon\|^{2})}
\\
    & = 1+ g(\epsilon) + O(\|\epsilon\|^{2}) \qquad \text{as } \epsilon \to 0,
    \end{aligned}
	\end{equation}
	where:
	\[
	g(\epsilon)=a_{11}\epsilon_1+a_{22}\epsilon_2+\sqrt{d_2(\epsilon)}.
	\]
Observe that $g$ is a positively homogeneous function of degree $1$.
A simple computation shows that 
	\begin{equation} \label{eq-pr12}
g\left(-\dfrac{a_{22}}{\sqrt{a_{11}^2+a_{22}^2}},-\dfrac{a_{11}}{\sqrt{a_{11}^2+a_{22}^2}}\right)=\dfrac{-2a_{11}a_{22}+|a_{12}a_{22}+a_{11}a_{21}|}{a_{11}^2+a_{22}^2}:=-4a_0<0,
	\end{equation}
	since the matrix $A$ is a $\mathcal{D}^+$-matrix. Using \eqref{eq-pr12} we deduce that there exists $\eta>0$ such that
	\begin{equation} \label{eq-pr13}
	g\left(\epsilon\right)<-2a_0,
	\end{equation}
	for every $\epsilon$ such that $\|\epsilon\|=1$ and
	\begin{equation} \label{eq-pr15}
	0<\dfrac{a_{11}}{a_{22}}-\eta \leq \dfrac{\epsilon_2}{\epsilon_1}\leq \dfrac{a_{11}}{a_{22}}+\eta.
	\end{equation}
	By homogeneity, we conclude that 	
	\begin{equation} \label{eq-pr14}
	g\left(\epsilon\right)<-2a_0\|\epsilon\|,
	\end{equation}
	for every $\epsilon \in (0,+\infty)^2$ satisfying \eqref{eq-pr15}.

	From \eqref{eq-autmax3} and \eqref{eq-pr14} we deduce that 
there exists $\bar\epsilon>0$ such that 
\begin{equation} \label{eq-autmax5}
\lambda_+(\epsilon)\leq 1-a_0\|\epsilon\|,
\end{equation}
	for every $\epsilon \in C_{{\bar \epsilon},\eta}$. Let us now take $\epsilon_0=\min \{{\bar \epsilon},1/a_0\}$; from \eqref{eq-autmax5} we immediately conclude that
	\[
	\sqrt{\lambda_+(\epsilon)}\leq 1-\dfrac{1}{2}\, a_0 \|\epsilon\|,
	\]
	for every $\epsilon \in C_{{\epsilon}_0,\eta}$.
\end{proof}

\subsection{Invariant sets and unbounded orbits of discrete maps}

In \eqref{sistema-asintotico} we have obtained an estimate for the Poincar\'e map
$ (\theta(0), r(0)) \mapsto (\theta(2\pi), r(2\pi)) $ associated to the system \eqref{thetaI} when
both components $r_{1,0}$ and $r_{2,0}$ of $ r(0) $ are large.
Here we provide sufficient conditions under which the discrete dynamical systems generated by similar maps
possess invariant sets that contain unbounded trajectories.

Few words are in order to clarify the setting in which the dynamical system is defined and represented.
Equations \eqref{sistema-asintotico} define a map $ (\theta,r) \mapsto (u,\rho) $, with
$ \theta = ( \theta_{1}, \theta_{2} ) $,  $r = ( r_{1}, r_{2} ) $, $ u = ( u_{1}, u_{2} ) $ and
$ \rho = ( \rho_{1}, \rho_{2} ) $, such that:
\begin{equation} \label{eq-mappa}
\left\{
\begin{aligned}
u &= \theta+ \left[\begin{array}{c} 2\pi n_1 \\ 2\pi n_2\end{array} \right]
  + \left[ \begin{array}{c} L_1(\theta)/r_{1} \\ L_2(\theta)/r_{2} \end{array} \right]
  + \left[ \begin{array}{c} G_1(\theta,r)/r_{1} \\ G_2(\theta,r)/r_{2} \end{array} \right] \\
\rho & = r - \left[\begin{array}{c} \partial_{1}L_1(\theta) \\ \partial_{2}L_2(\theta)\end{array}\right]
     + F(\theta,r),
\end{aligned}
\right.
\end{equation}
where $ n_{1},n_{2}\in\mathbb{N} $, $ G(\theta,r)=(G_{1}(\theta,r),G_{2}(\theta,r))$ and
$ F(\theta,r)=(F_{1}(\theta,r),F_{2}(\theta,r))$ are continuous,
$ L(\theta)=(L_{1}(\theta),L_{2}(\theta))$ is a $C^{1}$-function with
$ \partial_{j}L_{i} = \partial L_{i}/\partial\theta_{j} $, 
and, moreover, $L,G,F$ are all $2\pi$-periodic w.r.t. $\theta_{1}$ and $\theta_{2}$.
We recall that $ ( \theta_{i}, r_{i} ) $ and $ ( u_{i}, \rho_{i} ) $ are modified polar coordinates 
in $ \R^{2} $ according to \eqref{eq-polarmod} and, hence, there is a couple of well known issues to
take into account.

The first one concerns the singularity of polar coordinates whenever the radius vanishes and will be
easily dealt with since the invariant sets we are going to define will be contained in a region
where $ \min\{ r_{1}, r_{2}\} \ge R > 0 $.

The second issue is that \eqref{eq-mappa} defines a lifting of the actual dynamical
system that, indeed, acts on $ \mathbb{T}^{2} \times \R_{+}^{2} $, where, as usual,
$ \mathbb{T}^{2} = \R^{2} / (2\pi\mathbb{Z})^{2} $ denotes the two-dimensional torus.
More precisely, the coordinates $(\theta,r)$ and $(u,\rho)$ should be projected to 
$\mathbb{T}^{2}\times\R_{+}^{2}$ to determine the correct behavior of the dynamical system, but 
computations are more easily performed on the ``flat'' covering space $\R^{2}\times\R_{+}^{2}$.
To this aim, we denote by $\bar{\theta}_{i}$ the equivalence class of $\theta_{i}$ in
$ \mathbb{T}^{1} = \R/2\pi\mathbb{Z} $ and, thus, we will have
$\bar{\theta} = (\bar{\theta}_{1}, \bar{\theta}_{2}) \in \mathbb{T}^{2}$ for each
$\theta=(\theta_{1},\theta_{2})\in\R^{2}$;
the group metrics in $\mathbb{T}^1$ and $\mathbb{T}^{2} $ are respectively defined by
\begin{equation}\label{eq-metriche}
|\bar \theta_{i} - \bar u_{i} |=\min \{|\theta_{i}-u_{i} + 2n\pi|: n\in \mathbb{Z}\}
\quad \text{and} \quad
\|\bar\theta - \bar u\| = \sqrt{ |\bar\theta_{1}-\bar u_{1}|^{2}+|\bar\theta_{2}-\bar u_{2}|^{2} }.
\end{equation}
It will be clear from the context when $ |\cdot| $ and $ \|\cdot\| $ are meant on either $ \R $ and $ \R^{2} $ or
$\mathbb{T}^{1}$ and $\mathbb{T}^{2}$, respectively.
In particular, we observe that $ |\bar\theta_{i}-\bar u_{i}| = |\theta_{i} - u_{i}| $ if and only if
$|\theta_{i} - u_{i}| \le \pi $.

The invariant sets we obtain are built around a fixed $ \bar\omega \in \mathbb{T}^{2} $ and
depend of four other parameters as follows:
\begin{equation} \label{eq-definsiemeE}
E_{R,\Theta,\lambda,\eta}=\left\{(\bar\theta,r)\in\mathbb{T}^{2}\times\R_{+}^{2} 
: r_1\geq R,\ r_2\geq R,\ \lambda-\eta\leq \dfrac{r_1}{r_2}\leq \lambda +\eta,\ \|\bar\theta-\bar\omega\|\leq \Theta \right\}.
\end{equation}
where $R>0$, $0<\Theta < \pi$, $\lambda>0$ and $ 0 < \eta < \lambda $.
We will denote by $ f :E_{R,\Theta,\lambda,\eta}\to \mathbb{T}^{2}\times\R_{+}^{2}$ the map which has
\eqref{eq-mappa} as a lifting.
We remark that \emph{all different choices of $n_{1},n_{2}\in\mathbb{Z}$ in \eqref{eq-mappa} define
good liftings of the map $f$:} we will use the choice $n_{1}=n_{2}=0$ in the proof of the next result.
\begin{Theorem} \label{teo-astr}
  In the above setting, let us assume that there exists $\omega\in \R^{2}$ such that $ L(\omega) = 0 $
	and suppose that the Jacobian $JL(\omega)$ is a $\mathcal{D}^+$-matrix. 
	Moreover, assume that
	\begin{equation} \label{eq-iporesti}
	\lim_{\substack{r_i\to +\infty \\ i=1,2}} G(\theta,r)=0\quad\text{and}\quad
\lim_{\substack{r_i\to +\infty \\ i=1,2}} F(\theta,r)=0 \qquad \text{uniformly w.r.t. } \theta.
	\end{equation}
	Then, there exist $R>0$, $\Theta\in\left]0,\pi\right[$, $\lambda>0$ and $\eta\in\left]0,\lambda\right[$ such that:
	\begin{equation} \label{eq-invariantepos}
	f(E_{R,\Theta,\lambda,\eta})\subset E_{R,\Theta,\lambda,\eta}.
	\end{equation}
\end{Theorem}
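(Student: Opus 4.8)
The goal is to show that, for suitable parameters, the map $f$ sends the set $E_{R,\Theta,\lambda,\eta}$ built around $\bar\omega$ into itself. The natural choice of parameters is dictated by the structure of the problem: take $\lambda$ to be the ratio $\partial_1 L_1(\omega)/\partial_2 L_2(\omega)$ (equivalently $a_{11}/a_{22}$ in the notation of Definition~\ref{def-dpiumeno} applied to $A=JL(\omega)$), so that the cone condition in $E_{R,\Theta,\lambda,\eta}$ matches the cone $C_{\epsilon_0,\eta}$ of Lemma~\ref{lem-matrici} with $\epsilon_i = 1/r_i$. The other three parameters $R,\Theta,\eta$ will be chosen small (resp. large for $R$) at the end, after all the estimates have been collected. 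Throughout, work on the lifting \eqref{eq-mappa} with $n_1=n_2=0$, so that $\bar\omega$ is represented by an actual point $\omega\in\R^2$ and $\|\bar\theta-\bar\omega\|\le\Theta<\pi$ means $\|\theta-\omega\|\le\Theta$ with the honest Euclidean norm.

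\textbf{Step 1: the radial component.} Starting from a point $(\bar\theta,r)\in E_{R,\Theta,\lambda,\eta}$, I first control $\rho = r - (\partial_1 L_1(\theta),\partial_2 L_2(\theta)) + F(\theta,r)$. Since $L\in C^1$ and $L(\omega)=0$, on the ball $\|\theta-\omega\|\le\Theta$ the gradient terms are $O(\Theta)$-close to $(\partial_1 L_1(\omega),\partial_2 L_2(\omega))$; together with \eqref{eq-iporesti} this gives $\rho_i = r_i + O(1)$ uniformly, whence $\rho_i\ge R/2\ge R$ once $R$ is large — but to get the sharp bound $\rho_i\ge R$ I will need to be more careful and pair this with the cone estimate below rather than crudely. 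The key point is that the \emph{change} in $r$ is bounded and, more importantly, the change in the \emph{ratio} $r_1/r_2$ is small: writing $\rho_i = r_i(1 + \epsilon_i \delta_i)$ with $\epsilon_i=1/r_i$ and $\delta_i$ bounded, one sees $\rho_1/\rho_2 = (r_1/r_2)(1+O(1/R))$, so if $\eta$ is chosen and then $R$ is taken large enough (depending on $\eta$), the image ratio stays in $[\lambda-\eta,\lambda+\eta]$. This is where I use $\Theta<\pi$ only mildly; the real work is the angular estimate.

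\textbf{Step 2: the angular component via Lemma~\ref{lem-matrici}.} For the angle, I must show $\|u-\omega\|\le\Theta$. Write $u-\omega = (\theta-\omega) + (L_i(\theta)/r_i)_i + (G_i(\theta,r)/r_i)_i$. Taylor-expanding $L$ at $\omega$, using $L(\omega)=0$: $L_i(\theta) = \sum_j \partial_j L_i(\omega)(\theta_j-\omega_j) + o(\|\theta-\omega\|)$, so the leading part of $u-\omega$ is $B_\epsilon(\theta-\omega)$ where $\epsilon_i=1/r_i$ and $B_\epsilon$ is exactly the matrix \eqref{eq-matriceb} associated to the $\mathcal{D}^+$-matrix $A=JL(\omega)$. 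Here it is crucial that $JL(\omega)$ is a $\mathcal{D}^+$-matrix rather than a $\mathcal{D}^-$-matrix: the $\mathcal{D}^+$ sign conditions $a_{11},a_{22}<0$ are what make $B_\epsilon$ a contraction (for $\mathcal{D}^-$ one would instead run the argument backwards in time, i.e. for $f^{-1}$). By Lemma~\ref{lem-matrici}, choosing $\eta$ and $\epsilon_0$ from that lemma and requiring $R\ge 1/\epsilon_0\cdot\sqrt2$ so that $\epsilon=(1/r_1,1/r_2)\in C_{\epsilon_0,\eta}$ whenever $r_i\ge R$ and $r_1/r_2\in[\lambda-\eta,\lambda+\eta]$, we get $\|B_\epsilon\|_2 \le 1 - \tfrac12 a_0\|\epsilon\|$. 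Therefore $\|B_\epsilon(\theta-\omega)\| \le (1-\tfrac12 a_0\|\epsilon\|)\Theta$. The remaining terms — the Taylor remainder of $L$, which is $o(\Theta)$ hence $\le \sigma(\Theta)\Theta$ for some modulus $\sigma\to 0$, and the $G_i/r_i$ terms, which by \eqref{eq-iporesti} are $\le \kappa(R)/r_i$ with $\kappa(R)\to0$ — must be absorbed. Collecting: $\|u-\omega\| \le \Theta - \tfrac12 a_0\|\epsilon\|\Theta + \sigma(\Theta)\cdot C\|\epsilon\|\Theta\cdot(\text{const}) + \kappa(R)\cdot\|\epsilon\|\cdot C'$, where I have used that all the $1/r_i$-scale quantities carry a factor comparable to $\|\epsilon\|$. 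Thus $\|u-\omega\|\le\Theta$ provided $\tfrac12 a_0\Theta$ dominates the two error contributions, i.e. provided $\Theta$ is small enough that $\sigma(\Theta)$ is small compared to $a_0$, and then $R$ large enough that $\kappa(R)$ is small compared to $a_0\Theta$. This closes the angular inclusion.

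\textbf{Step 3: order of choices and the main obstacle.} The parameters are fixed in the order $\lambda$ (from $JL(\omega)$), then $\eta$ and $a_0,\epsilon_0$ (from Lemma~\ref{lem-matrici}, possibly shrinking $\eta$ further to also control the ratio in Step~1), then $\Theta$ (small, so the $C^1$-remainder of $L$ is beaten by $a_0$), then finally $R$ (large, so $\epsilon=(1/r_1,1/r_2)$ lands in the cone $C_{\epsilon_0,\eta}$, so $\kappa(R)$ and the $O(1/R)$ ratio-drift are negligible, and so $\rho_i\ge R$). I expect the main obstacle to be bookkeeping the dependence of all the error terms on $\|\epsilon\|$ versus on $\Theta$ versus on $R$ so that they can be absorbed in the right order without circularity — in particular, making sure that the radial drift in Step~1 (which is $O(1)$, \emph{not} small) does not spoil the ratio condition, which works only because the ratio drift is $O(1/R)$, and making sure the singularity of polar coordinates never intervenes since everything happens in $\{r_i\ge R\}$. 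The genuinely delicate analytic input — the contraction estimate $\|B_\epsilon\|_2\le 1-\tfrac12 a_0\|\epsilon\|$ uniformly over the cone — has already been isolated in Lemma~\ref{lem-matrici}, so the present proof is essentially a careful perturbation argument around that lemma.
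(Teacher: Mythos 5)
Your overall strategy coincides with the paper's: the same choice $\lambda=\partial_1L_1(\omega)/\partial_2L_2(\omega)$, the same order of fixing the parameters, and the angular estimate via the Taylor expansion of $L$ at $\omega$ combined with the contraction bound $\|B_\epsilon\|_2\le 1-\tfrac12 a_0\|\epsilon\|$ of Lemma \ref{lem-matrici}. Your Step 2 is essentially the paper's Part 3 (the paper additionally splits $E$ into the pieces $\|\bar\theta-\bar\omega\|\le\Theta/2$ and $\Theta/2\le\|\bar\theta-\bar\omega\|\le\Theta$ so that the additive $G_i/r_i$ term can be written as a multiple of $\|\theta-\omega\|$, but your accounting can be made to work as well). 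The genuine gaps are both in your Step 1, i.e.\ in the radial invariance.

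First, you never prove $\rho_i\ge R$. The observation $\rho_i=r_i+O(1)$ gives nothing (your ``$\rho_i\ge R/2\ge R$'' is vacuous), and you explicitly defer the issue to ``the cone estimate below,'' which concerns the angle, not the radius. The missing idea is monotonicity: since $JL(\omega)$ is a $\mathcal{D}^+$-matrix, $\partial_iL_i(\omega)<0$, so by continuity $\partial_iL_i(\theta)\le-\gamma_i<0$ for $\|\bar\theta-\bar\omega\|\le\Theta_0$, and after imposing $F_i\ge-\gamma_i/2$ for $r_1,r_2\ge R_1$ one gets $\rho_i=r_i-\partial_iL_i(\theta)+F_i(\theta,r)\ge r_i+\gamma_i/2>r_i\ge R$. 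This strict increase is not a technicality: it is also the mechanism behind Theorem \ref{teo-astr2} ($(r_n)_i\to+\infty$). Second, your argument for the cone condition $\lambda-\eta\le\rho_1/\rho_2\le\lambda+\eta$ --- ``the ratio drift is $O(1/R)$, so take $R$ large'' --- cannot work: if $r_1/r_2$ sits exactly on the boundary value $\lambda+\eta$, a drift of size $O(1/R)$ of uncontrolled sign pushes the image outside the cone no matter how large $R$ is; smallness of a perturbation never yields invariance of a closed region. What is needed, and what the paper proves in \eqref{eq-pr25}--\eqref{eq-pr26}, is that the drift of the ratio points inward at the boundary rays: writing
\[
\frac{\rho_1}{\rho_2}\le(\lambda+\eta)\left(1-\frac{\dfrac{\partial_1L_1(\theta)}{\lambda+\eta}-\partial_2L_2(\theta)+F_2-\dfrac{F_1}{\lambda+\eta}}{r_2-\partial_2L_2(\theta)+F_2}\right),
\]
the fraction is nonnegative because the choice of $\lambda$ and the negativity of $\partial_1L_1(\omega)$ give $\dfrac{\partial_1L_1(\omega)}{\lambda+\eta}-\partial_2L_2(\omega)=\dfrac{-\partial_2L_2(\omega)\,\eta}{\lambda+\eta}>0$, with a margin that survives the perturbations in $\theta$ and the $F_i$ after the choices \eqref{eq-scelta3}--\eqref{eq-scelta4}. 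Without this sign analysis the set $E_{R,\Theta,\lambda,\eta}$ is not invariant, so your proof as written does not close.
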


\begin{proof}
We divide the proof into three parts.
	
	\smallskip
	\noindent
	\textit{Part 1.} Choice of the constants $R, \Theta, \lambda$ and $\eta$.
Let 
	\begin{equation} \label{eq-scelta0}
	\lambda= \dfrac{\partial_1 L_1(\omega)}{\partial_2 L_2(\omega)} > 0,
	\end{equation}
	let $\eta,\epsilon_{0} > 0 $ be as in Lemma \ref{lem-matrici} and let $R_0=1/\epsilon_0$.
	Since $JL(\omega)$ is a $\mathcal{D}^+$-matrix we deduce that there exist $\Theta_0\in\left]0,\pi\right[$ and $\gamma_i>0$, $i=1, 2$, such that
	\begin{equation} \label{eq-scelta1}
	\partial_i L_i (\theta)\leq -\gamma_i<0,\quad \text{for } i=1,2 \text{ and }
\forall \theta\in \RR : \|\bar\theta-\bar\omega\|\leq \Theta_0.
	\end{equation}
Moreover, according to assumption \eqref{eq-iporesti}, let $R_1\geq R_0$ such that
	\begin{equation} \label{eq-scelta2} 
	F_i(\theta,r)\geq -\dfrac{\gamma_i}{2},\quad \text{for } i=1,2 \text{ and }
 \forall \ \theta\in \RR,\ r_1\geq R_1,\ r_2\geq R_1.
	\end{equation}
By the continuity of $JL(\theta)$ in $\theta=\omega$, a simple computation shows that there exists $\Theta_1\in\left]0, \Theta_0\right]$ such that 
	\begin{equation} \label{eq-scelta3}
	\begin{aligned}
\dfrac{\partial_1 L_1(\theta)}{\lambda+\eta}-\partial_2 L_2(\theta)
& \geq \dfrac{-\partial_2 L_2(\omega) \eta}{2(\lambda+\eta)}
\\
\partial_2 L_2(\theta)-\dfrac{\partial_1 L_1(\theta)}{\lambda-\eta}
& \geq \dfrac{-\partial_2 L_2(\omega) \eta}{2(\lambda-\eta)}
	\end{aligned}
\qquad\forall\theta:\|\bar\theta-\bar\omega\| \leq \Theta_1.
	\end{equation}
Moreover, using again assumption \eqref{eq-iporesti}, we deduce that there exists $R_2\geq R_1$ such that 
	\begin{equation} \label{eq-scelta4}
	\begin{aligned}
	\left| F_2(\theta,r)-\dfrac{F_1(\theta,r)}{\lambda+\eta}\right| & <
\dfrac{-\partial_2 L_2(\omega) \eta}{2(\lambda+\eta)}\\
	\displaystyle \left| \dfrac{F_1(\theta,r)}{\lambda-\eta}-F_2(\theta,r)\right| & <
\dfrac{-\partial_2 L_2(\omega) \eta}{2(\lambda-\eta)}
	\end{aligned}
\qquad\text{for each }  r_1\geq R_2,\ r_2\geq R_2 \text{ and } \theta\in \R^{2}.
	\end{equation}

Now, let us write
	\begin{equation} \label{eq-pr23}
	L_{i}(\theta)=\langle \nabla L_i(\omega),\theta-\omega \rangle +\alpha_i(\theta)\|\theta-\omega\|,
	\end{equation}
	for $i=1 ,2$ and $\theta \in \RR$, with
	\[
	\lim_{\theta \to \omega} \alpha(\theta)=0,\quad i=1,2.
	\]
with $\alpha(\theta):=(\alpha_{1}(\theta),\alpha_{2}(\theta))$.
	Then, we choose $\Theta\in\left]0,\Theta_1\right]$ such that
		\begin{equation} \label{eq-scelta5}
	\|(\alpha_1 (\theta),\alpha_2(\theta))\|\leq \dfrac{a_0}{4},\qquad \text{if } \|\theta-\omega\|\leq \Theta,
	\end{equation}
	where $a_0$ is given in Lemma \ref{lem-matrici}. 
	
	\noindent
	Let us now  define
	\begin{equation} \label{eq-pr24}
	L^*=\max\{ \|L(\theta)\| : \|\theta-\omega\|\leq \Theta/2\};
	\end{equation}
	according to assumption \eqref{eq-iporesti}, let $R_3\geq R_2$ be such that 
	\begin{equation} \label{eq-scelta6}
	\displaystyle \|G(\theta,r)\|<\min \left\{ L^*, \dfrac{a_0\, \Theta}{8}\, \right\}
	\quad \text{for every } \theta\in \RR \text{ and } r_1, r_2\geq R_3.
	\end{equation}
Finally, let us fix 
	\begin{equation} \label{eq-scelta7}
	R\geq \max \left\{R_3, \dfrac{4L^*}{\Theta}\right\}
	\end{equation}
	 and consider the set $E_{R,\Theta,\lambda,\eta}$ corresponding to the chosen constants.
From now on, we will simply denote this set by $E$.
\medskip

\noindent
	\textit{Part 2.} Invariance of $E$ with respect to the radial components.
Let us fix $(\theta,r)$ such that $(\bar\theta,r)\in E$ and consider $\rho=(\rho_{1},\rho_{2}) $ given by \eqref{eq-mappa}.
From conditions \eqref{eq-scelta1} and \eqref{eq-scelta2} we immediately deduce that
	\begin{equation} \label{eq-raggiopos}
	\rho_i \geq r_i+\dfrac{\gamma_i}{2} >r_i,\quad \text{for }i=1, 2.
	\end{equation}
	On the other hand, we have $ r_1\leq \left(\lambda+\eta \right)r_2 $ and, then, we infer that
\begin{equation} \label{eq-pr25}
\begin{aligned}	
\dfrac{\rho_1}{\rho_2} & =
\dfrac{r_1-\partial_1 L_1(\theta)+F_1(\theta,r)}{r_2-\partial_2 L_2(\theta)+F_2(\theta,r)}
\\
& \leq (\lambda+\eta) \dfrac{r_2-\dfrac{\partial_1 L_1(\theta)}{\lambda+\eta}+\dfrac{F_1(\theta,r)}{\lambda+\eta}}{r_2-\partial_2 L_2(\theta)+F_2(\theta,r)}
\\
& = (\lambda+\eta) \left(1-\dfrac{\dfrac{\partial_1 L_1(\theta)}{\lambda+\eta}-\partial_2 L_2(\theta)+F_2(\theta,R)-\dfrac{F_1(\theta,r)}{\lambda+\eta}}{r_2-\partial_2 L_2(\theta)+F_2(\theta,r)}\right).
\end{aligned}	
\end{equation}
Let us now observe that \eqref{eq-raggiopos} implies that $r_2-\partial_2 L_2(\theta)+F_2(\theta,r)>0$ in $E$;
moreover, from the first relations in \eqref{eq-scelta3} and \eqref{eq-scelta4}, we deduce that
	\begin{equation} \label{eq-pr26}
\dfrac{\partial_1 L_1(\theta)}{\lambda+\eta} - \partial_2 L_2(\theta) + F_2(\theta,R) - \dfrac{F_1(\theta,r)}{\lambda+\eta}>0.
\end{equation}
From \eqref{eq-pr25} and \eqref{eq-pr25} we thus conclude that
	\begin{equation} \label{eq-bordor+}
	\dfrac{\rho_1}{\rho_2}\leq \lambda+\eta.
	\end{equation}
In an analogous way, taking into account the second relations in \eqref{eq-scelta3} and \eqref{eq-scelta4}, it is possible to prove that
	\begin{equation} \label{eq-bordor-}
	\dfrac{\rho_1}{\rho_2}\geq \lambda-\eta.
	\end{equation}
From \eqref{eq-raggiopos}, \eqref{eq-bordor+} and \eqref{eq-bordor-} we deduce the invariance of the set $E$ with respect to the radial components.
\medskip \\
\textit{Part 3.} Invariance with respect to the angular components.
We have to show that, if $(\bar\theta,r)\in E$ then $\|\bar u - \bar\omega\|\le\Theta$, where $u$ is given in \eqref{eq-mappa}.
By the definition of the metric on $\mathbb{T}^{2}$ in \eqref{eq-metriche} and the choice $\Theta<\pi$,
it is enough to work on the covering space and to prove that for a suitable lifting \eqref{eq-mappa} we 
have $ \|u - \omega\|\le\Theta $, with $\theta\in\R^{2}$ such that 
$\|\theta-\omega\|\le\Theta$, where these last two norms are Euclidean in the covering space $\R^{2}$
of $\mathbb{T}^{2}$.
As already announced just before the statement of the theorem, the choice
$ n_{1}=n_{2}=0 $ in \eqref{eq-mappa} will work here.

Let us split the set $E$ into the following two subsets
	\[
E_{1} = \left\{(\bar\theta,r)\in E:\ \|\bar\theta-\bar\omega\| \leq \dfrac{\Theta}{2} \right\},\quad
E_{2} = \left\{(\bar\theta,r)\in E:\ \dfrac{\Theta}{2}\leq \|\bar\theta-\bar\omega\| \leq \Theta \right\}.
	\]
If $(\bar\theta,r)\in E_{1}$, then, using the first equation in \eqref{eq-mappa}, with $n_{1}=n_{2}=0$,
and also \eqref{eq-pr24}, \eqref{eq-scelta6} and \eqref{eq-scelta7}, we deduce that
\begin{equation} \label{eq-invarianzathetapiccolo}
\| u - \omega\| \leq
\| \theta - \omega\|+\frac{1}{R} \|L(\theta)\|+\dfrac{1}{R} \|G(\theta,r)\|
\leq \|\theta -\omega\| + \frac{1}{R} L^* + \frac{1}{R} L^*
\leq \frac{\Theta}{2}+\frac{2L^*}{R}\leq \Theta.
	\end{equation}
On the other hand, if $(\bar\theta,r)\in E_{2}$,
we use \eqref{eq-pr23} and write:
\[
\begin{aligned}
u - \omega & = B (\theta - \omega) +
\left[
\begin{array}{c} \dfrac{\alpha_{1}(\theta)}{r_{1}} + \dfrac{G_{1}(\theta,r)}{r_{1}\|\theta-\omega\|} \vspace{4pt} \\
                 \dfrac{\alpha_{2}(\theta)}{r_{2}} + \dfrac{G_{2}(\theta,r)}{r_{2}\|\theta-\omega\|} \end{array}
\right] \|\theta-\omega\|,
\end{aligned}
\]
where the matrix $B$ is given by
\[
B = \left(
\begin{array}{cc}
1+\partial_{1}L_1(\omega)/r_1 & \partial_{2}L_1(\omega)/r_1 \vspace{4pt}\\
\partial_{1}L_2(\omega)/r_2 & 1 +\partial_{2}L_2(\omega)/r_2 
\end{array}\right)
\]
and has the form \eqref{eq-matriceb} with $ \epsilon=(1/r_{1},1/r_{2}) $.
Using \eqref{eq-scelta5} and \eqref{eq-scelta6} we deduce that
\[
\| u-\omega\| \le \|B\|_{2}\|\theta-\omega\| +
\left(\|\alpha(\theta)\|\|\epsilon\|+\frac{2\|G(\theta,r)\|}{\Theta}\|\epsilon\|\right)\|\theta-\omega\|
\le \left(\|B\|_{2}+\frac{a_{0}}{2}\|\epsilon\|\right)\Theta
\]
Now, $(\bar\theta,r)\in E$ implies that $\epsilon=(1/r_1,1/r_2)\in C_{\epsilon_0,\eta}$, see \eqref{eq-defcono},
and we can
use Lemma~\ref{lem-matrici} to obtain that $\|B\|_{2}\le(1-a_{0}\|\epsilon\|/2)$ and conclude that
$ \| u-\omega\| \le \Theta $.
\end{proof}
Now, let $(\theta_0,r_0)\in E_{R,\Theta,\lambda,\eta}$, with $E_{R,\Theta,\lambda,\eta}$ given by
Theorem~\ref{teo-astr};
since $E_{R,\Theta,\lambda,\eta}$ is positively invariant, we can recursively define
\[
(\theta_{n+1},r_{n+1})=f(\theta_n,r_n) \in E_{R,\Theta,\lambda,\eta},\quad \forall \ n\geq 0.
\]
From \eqref{eq-raggiopos} we know that
\[
(r_1)_i\geq (r_0)_i+\dfrac{\gamma_i}{2} ,\quad i=1,2,
\]
and iterating we infer that
\[
(r_n)_i\geq (r_0)_i+ n\dfrac{\gamma_i}{2},\quad i=1,2,\quad n\geq 1.
\]
This relation is sufficient to prove the final result of this section.
\begin{Theorem} \label{teo-astr2}
In the same setting of Theorem \ref{teo-astr}, for every $	(\theta_0,r_0)\in E_{R,\Theta,\lambda,\eta}$ we have
	\[
	\lim_{n\to +\infty} (r_n)_i=+\infty, \quad i=1, 2,
	\] 
	where $(\theta_{n+1},r_{n+1})=f(\theta_n,r_n)$, for every $n\geq 0$.
\end{Theorem}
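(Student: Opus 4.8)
The plan is to iterate the positively invariant set produced by Theorem~\ref{teo-astr} and simply read off a linear-in-$n$ lower bound for each radial component; there is essentially nothing new to prove, since all the quantitative work was already done inside Theorem~\ref{teo-astr}.

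First I would fix $(\theta_0,r_0)$ with $(\bar\theta_0,r_0)\in E:=E_{R,\Theta,\lambda,\eta}$, where $R,\Theta,\lambda,\eta$ are the constants furnished by Theorem~\ref{teo-astr}. By \eqref{eq-invariantepos}, $E$ is positively invariant under $f$, so the sequence defined by $(\theta_{n+1},r_{n+1})=f(\theta_n,r_n)$ is well defined for all $n\ge 0$ and satisfies $(\bar\theta_n,r_n)\in E$ for every $n$; in particular $(r_n)_1\ge R>0$ and $(r_n)_2\ge R>0$, so we remain uniformly away from the singularity of the modified polar coordinates and $f$ is genuinely defined at each step.

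Next, at each step I would invoke the radial inequality established inside the proof of Theorem~\ref{teo-astr}: since $(\bar\theta_n,r_n)\in E$, estimate \eqref{eq-raggiopos} — which relies only on \eqref{eq-scelta1} and \eqref{eq-scelta2}, hence holds throughout $E$ with the \emph{same} constants $\gamma_1,\gamma_2>0$ fixed in \eqref{eq-scelta1} — yields $(r_{n+1})_i\ge (r_n)_i+\gamma_i/2$ for $i=1,2$. The crucial point is that the increment $\gamma_i/2$ is a fixed positive number, independent of $n$ and of the particular point of $E$, because $\gamma_i$ was chosen uniformly on $\{\theta:\|\bar\theta-\bar\omega\|\le\Theta_0\}$, a region containing the $\bar\theta$-projection of $E$ (recall $\Theta\le\Theta_0$). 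A trivial induction (telescoping the previous inequality) then gives $(r_n)_i\ge (r_0)_i+n\,\gamma_i/2$ for all $n\ge 1$ and $i=1,2$, and letting $n\to+\infty$ gives $\lim_{n\to+\infty}(r_n)_i=+\infty$, as claimed.

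I do not expect any real obstacle here: the only mild subtlety is the interplay between invariance and the radial estimate — one must use $f(E)\subset E$ to know that \eqref{eq-raggiopos} is legitimately applicable at \emph{every} iterate, and that the strict-increase constant does not degrade along the orbit — but both facts are immediate from the way Theorem~\ref{teo-astr} was set up, where the invariant region and the uniform strict radial increase were constructed simultaneously.
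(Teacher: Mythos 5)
Your proposal is correct and coincides with the paper's own argument: the paper likewise iterates the invariance \eqref{eq-invariantepos} together with the uniform radial increment \eqref{eq-raggiopos} to obtain $(r_n)_i\ge (r_0)_i+n\gamma_i/2$ and concludes by letting $n\to+\infty$. No gaps.
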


\begin{remark} \label{rem-invarianteneg2}
We observe that, in the case of a one-to-one map $f$ as above, an analogous result can be proved when $JL(\omega)$ is a $\mathcal{D}^-$-matrix; indeed, in this situation there exist $R>0$, $0<\Theta < \pi$, $\lambda>0$ and $ 0 < \eta < \lambda $ such that:
	\[
	f^{-1}(E_{R,\Theta,\lambda,\eta})\subset E_{R,\Theta,\lambda,\eta},
	\]
	Then, for every $(\theta_0,r_0)\in E_{R,\Theta,\lambda,\eta}$ it is possible to define
	\[
	(\theta_{n-1},r_{n-1})=f(\theta_{n},r_{n}),
	\]
	for every $n\leq 0$, and we have 
	\[
	\lim_{n\to -\infty} (r_n)_i=+\infty, \quad i=1, 2.
	\] 	
\end{remark}

\section{The main result and some corollaries}\label{sec-4}

In this section we apply the theory developed in Section \ref{sec-3} in order to prove our main result, dealing with the existence of unbounded solutions to the system
\begin{equation}\label{oscacc}
\left\{\begin{array}{l}
\ddot{x}_1+a_1x_1^+-b_1x^-_1+\phi_1(x_2)=p_1(t) \vspace{7pt}\\
\ddot{x}_2+a_2\,x_2^+-b_2\,x^-_2+\phi_2(x_1)=p_2(t).
\end{array}
\right.
\end{equation}
We recall that, for $i=1,2$, we are assuming the resonance condition
\begin{equation}\label{eq-relaibi}
\dfrac{1}{\sqrt{a_i}}+\dfrac{1}{\sqrt{b_i}}=\dfrac{2}{n}, \quad \mbox{ for some } n \in \mathbb{N},
\end{equation}
Moreover, the function $p_i: \mathbb{R} \to \mathbb{R}$ is continuous and $2\pi$-periodic and the function $\phi_i: \mathbb{R} \to \mathbb{R}$ is locally Lipschitz continuous and bounded, with
\begin{equation} \label{eq-rellimitiphii2}
\phi_i(-\infty)=-\phi_i(+\infty).
\end{equation}
In this setting, and recalling the definition of the function $L$ given in \eqref{proof6}-\eqref{eq-defL}, the following result holds true.

\begin{Theorem} \label{teo-main}
Assume conditions \eqref{eq-relaibi} and \eqref{eq-rellimitiphii2}; moreover, suppose that there exists $\omega \in \mathbb{R}^2$ such that $L(\omega) = 0$ and $JL(\omega)$ is a $\mathcal{D}^+$-matrix. 
Then, there exists an infinite measure set $E\subset \RR\times \RR$ such that 
\begin{equation} \label{eq-tesi1}
\lim_{t\to +\infty} (|x_i(t)|^2+|x'_i(t)|^2) =+\infty, \qquad i=1,2,
\end{equation}
for every solution $x$ of \eqref{oscacc} such that $(x(0),x'(0))\in E$.
\end{Theorem}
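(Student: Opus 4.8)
The plan is to translate the question about unbounded solutions of \eqref{oscacc} into the language of the discrete dynamical system studied in Section \ref{sec-3}, and then simply invoke Theorem \ref{teo-astr} and Theorem \ref{teo-astr2}. First I would recall that, after the change of variables \eqref{eq-polarmod}, solutions of \eqref{oscacc} correspond to solutions of \eqref{thetaI}, and that the time-$2\pi$ map of \eqref{thetaI} in the action-angle coordinates $(\theta,r)$ admits, for $\min\{r_{1,0},r_{2,0}\}$ large, the asymptotic expansion \eqref{sistema-asintotico}. This expansion is precisely of the form \eqref{eq-mappa}, with the remainders $F_{i,i}+F_{i,i+1}$, $G_{i,i}+G_{i,i+1}$ playing the role of $F_i$, $G_i$; the uniform decay \eqref{stime-resti} gives exactly hypothesis \eqref{eq-iporesti} of Theorem \ref{teo-astr} (one has to note that the mixed remainders $F_{i,i+1}$, $G_{i,i+1}$ vanish as $r_{i+1,0}\to+\infty$, so when \emph{both} radii go to infinity all remainders go to $0$ uniformly in $\theta$). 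The Poincaré map of \eqref{oscacc} is a homeomorphism (by uniqueness and continuous dependence for the Cauchy problem, since the right-hand side is locally Lipschitz), hence it is well defined and continuous on the region $\min\{r_1,r_2\}\ge R$ where the singularity of polar coordinates is irrelevant.

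Next, since by hypothesis there is $\omega\in\RR$ with $L(\omega)=0$ and $JL(\omega)$ a $\mathcal{D}^+$-matrix, Theorem \ref{teo-astr} provides constants $R>0$, $\Theta\in\,]0,\pi[$, $\lambda>0$, $\eta\in\,]0,\lambda[$ such that the set $E_{R,\Theta,\lambda,\eta}\subset\mathbb{T}^2\times\R_+^2$ defined in \eqref{eq-definsiemeE} is positively invariant for the projected Poincaré map $f$. Then Theorem \ref{teo-astr2} guarantees that every orbit starting in $E_{R,\Theta,\lambda,\eta}$ has both radial components tending to $+\infty$ under iteration of $f$; more precisely, from \eqref{eq-raggiopos} one gets the linear-in-$n$ lower bound $(r_n)_i\ge (r_0)_i+n\gamma_i/2$.

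The remaining work is to go back from ``the iterates of the Poincaré map escape to infinity'' to the continuous-time statement \eqref{eq-tesi1}. I would lift $E_{R,\Theta,\lambda,\eta}$ back to the $(x,x')$-coordinates via \eqref{eq-polarmod}: the preimage $E\subset\RR\times\RR$ is the set of initial data $(x(0),x'(0))$ whose action-angle coordinates $(\bar\theta_0,r_0)$ lie in $E_{R,\Theta,\lambda,\eta}$. Since $E_{R,\Theta,\lambda,\eta}$ contains, for each admissible angle, an unbounded set of radii (a ``cone'' $\lambda-\eta\le r_1/r_2\le\lambda+\eta$ with $r_1,r_2\ge R$), and the change of variables \eqref{eq-polarmod} is a diffeomorphism off the origin with Jacobian bounded away from $0$ and $\infty$ on $\min\{r_1,r_2\}\ge R$, the set $E$ has infinite Lebesgue measure. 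Finally, for $(x(0),x'(0))\in E$ the corresponding solution is defined for all $t\ge 0$ (the equation is at most linearly growing at infinity, since $\phi_i$ is bounded, so there is no blow-up) and its action variables at times $t=2\pi n$ are the components of $r_n$, hence $r_i(2\pi n)\to+\infty$; since between consecutive integer multiples of $2\pi$ the radii vary by a bounded amount (because $\dot r_i$ is uniformly bounded, as noted before \eqref{eq-stimaI}), we get $r_i(t)\to+\infty$ as $t\to+\infty$, and because $|x_i(t)|^2+|x_i'(t)|^2$ is comparable to $r_i(t)^2$ (the asymmetric cosine and sine satisfy $C_i^2+(S_i/\sqrt{a_i})^2\equiv 1$ or an analogous positive-homogeneous-of-degree-two identity), \eqref{eq-tesi1} follows.

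The main obstacle, conceptually, is the matching between the abstract map \eqref{eq-mappa} and the concrete Poincaré map: one must check carefully that the remainder terms in \eqref{sistema-asintotico} really satisfy the \emph{two-sided} uniform limits \eqref{eq-iporesti} (the subtlety being the non-uniform convergence on the sets $J^\pm_{i+1,0}$, already handled in Section \ref{sec-2}), and that the covering-space bookkeeping between $\R^2\times\R_+^2$ and $\mathbb{T}^2\times\R_+^2$ is consistent with the fact that $L$ is only defined up to the $2\pi\Z$-action. Everything else—existence and continuity of the Poincaré map, global extendability of solutions, the infinite-measure claim, and the passage from discrete to continuous divergence of $r_i(t)$—is routine once the identification is in place.
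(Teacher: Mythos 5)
Your proposal is correct and follows essentially the same route as the paper: identify the Poincar\'e map of \eqref{oscacc}, via the expansion \eqref{sistema-asintotico}, with the abstract map \eqref{eq-mappa} (noting that \eqref{stime-resti} yields \eqref{eq-iporesti}), apply Theorems \ref{teo-astr} and \ref{teo-astr2} to get a positively invariant set with diverging radial iterates, pull this set back through \eqref{eq-polarmod}, and then pass from divergence at the times $2k\pi$ to divergence for all $t\to+\infty$. The only cosmetic difference is in this last step, where the paper invokes Gronwall's lemma following Alonso--Ortega while you use the uniform boundedness of $\dot r_i$ already established before \eqref{eq-stimaI}; the two arguments are equivalent here.
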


\begin{remark} \label{rem-solmenoinfinito}
	According to Remark \ref{rem-invarianteneg2}, an analogous result for $t\to -\infty$ can be proved when $JL(\omega)$ is a $\mathcal{D}^-$-matrix.
\end{remark}

\begin{proof} The result follows from an application of Theorem \ref{teo-astr2}, taking into account the fact that, from \eqref{sistema-asintotico}, the Poincar\'e map associated with \eqref{oscacc} is of the form \eqref{eq-mappa}, with \eqref{stime-resti} implying \eqref{eq-iporesti}.
	
	\noindent
	More precisely, let $E\subset \RR\times \RR$ be the set corresponding, via action-angle coordinates, to the set $E_{R,\Theta,\omega,\lambda,\eta}$ given in the statement of Theorem \ref{teo-astr2} and let $x$ be a solution of \eqref{oscacc} such that $(x(0),x'(0))\in E$. Then, from Theorem \ref{teo-astr2} we infer that
	\[
	\lim_{k\to +\infty} (|x_i(2k\pi)|^2+|x'_i(2k\pi)|^2) =+\infty.
	\]
	The thesis \eqref{eq-tesi1} follows from this relation and an application of Gronwall's lemma (see e.g. \cite[Proof of Th. 41]{AloOrt98}), taking into account the boundedness of $\phi_i$, for $i=1,2$.
\end{proof}

In the rest of the section, we discuss some concrete situations in which the abstract condition on the zeros of the function $L$ is verified, thus providing more explicit corollaries of Theorem \ref{teo-main}, depending on the structure of the set of zeroes of the functions $\Phi_i$, $i=1,2$, defined in \eqref{proof6}. 

The first situation we deal with is the one in which both $\Phi_1$ and $\Phi_2$ have a simple zero (in the scalar setting, this situation was the one treated by \cite[Th. 4.1]{AloOrt98}). More precisely, we assume that there exists $\omega^* = (\omega^*_1,\omega^*_2) \in \mathbb{R}^2$ such that
\begin{equation} \label{eq-zerisemplici}
\Phi_1(\omega^*_1)=\Phi_2(\omega^*_2)=0, \qquad \Phi'_i(\omega^*_i) < 0, \quad i=1,2.
\end{equation}
Under this assumption, the following result holds true.

\begin{corollary} \label{cor-piccolo}
Assume conditions \eqref{eq-relaibi}, \eqref{eq-rellimitiphii2} and \eqref{eq-zerisemplici}. Then, there exists $\phi^*=\phi^*(a_1,a_2,p_1,p_2)>0$ such that, for every functions $\phi_i$ with $|\phi_i(+\infty)|<\phi^*$ ($i=1,2$), there exists an infinite measure set $E\subset \RR\times \RR$ such that
\[
\lim_{t\to +\infty} (|x_i(t)|^2+|x'_i(t)|^2) =+\infty, \qquad i=1,2,
\]
for every solution $x$ of \eqref{oscacc} such that $(x(0),x'(0))\in E$.
\end{corollary}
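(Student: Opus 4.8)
The plan is to reduce the statement to Theorem~\ref{teo-main} by a perturbation argument off the uncoupled case. The starting point is the observation that, by \eqref{proof6}, the resonance function depends on the coupling terms $\phi_1,\phi_2$ \emph{only} through the two real numbers $m_i:=\phi_i(+\infty)$; writing $L=L^{(m)}$ to stress this, we have
\[
L^{(m)}_i(\theta)=\Phi_i(\theta_i)+\gamma_i n\, m_i\bigl(\Lambda_i(\theta_i-\theta_{i+1})-\alpha_i\bigr),\qquad i=1,2,
\]
for $m=(m_1,m_2)\in\mathbb{R}^{2}$. For $m=0$ this is the uncoupled function $L^{(0)}(\theta)=(\Phi_1(\theta_1),\Phi_2(\theta_2))$, so by \eqref{eq-zerisemplici} we get $L^{(0)}(\omega^*)=0$ and
\[
JL^{(0)}(\omega^*)=\begin{pmatrix}\Phi_1'(\omega^*_1)&0\\[2pt]0&\Phi_2'(\omega^*_2)\end{pmatrix},
\]
a diagonal matrix with strictly negative diagonal, hence a $\mathcal{D}^+$-matrix (Definition~\ref{def-dpiumeno}); in particular it is invertible.

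Next I would apply the implicit function theorem to the map $(m,\theta)\mapsto L^{(m)}(\theta)$. This map is $C^1$: it is affine in $m$, while in $\theta$ we know that $L$ is $C^1$ (see the discussion preceding \eqref{eq-mappa}; for the summands, use \eqref{eq-deflambda}--\eqref{eq-defsigma} together with Lemma~\ref{lem-propLambda} for $\Lambda_i$, and the classical argument of \cite{AloOrt98} for $\Phi_i$), and moreover the partial derivatives $\partial L^{(m)}_i/\partial\theta_j$ are jointly continuous in $(m,\theta)$. Since the $\theta$-Jacobian of $L^{(m)}$ at $(m,\theta)=(0,\omega^*)$ equals $JL^{(0)}(\omega^*)$, which is invertible, there exist $\delta>0$ and a continuous map $m\mapsto\omega(m)\in\mathbb{R}^{2}$, defined for $\|m\|<\delta$, with $\omega(0)=\omega^*$ and $L^{(m)}(\omega(m))=0$ for all such $m$.

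To conclude, I would use that the set of $\mathcal{D}^+$-matrices is open in the space of $2\times2$ matrices, the three conditions in \eqref{eq-dipiu} being strict inequalities between continuous functions of the entries. Since $(m,\theta)\mapsto JL^{(m)}(\theta)$ is continuous, $m\mapsto\omega(m)$ is continuous, and $JL^{(0)}(\omega(0))=JL^{(0)}(\omega^*)$ is a $\mathcal{D}^+$-matrix, there is $\phi^*\in(0,\delta]$ such that $JL^{(m)}(\omega(m))$ is still a $\mathcal{D}^+$-matrix whenever $|m_1|<\phi^*$ and $|m_2|<\phi^*$. Then, for every admissible $\phi_1,\phi_2$ with $|\phi_i(+\infty)|<\phi^*$ ($i=1,2$), the point $\omega:=\omega\bigl(\phi_1(+\infty),\phi_2(+\infty)\bigr)$ satisfies $L(\omega)=0$ with $JL(\omega)$ a $\mathcal{D}^+$-matrix, so Theorem~\ref{teo-main} applies and provides the infinite-measure set $E$. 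Finally one checks that $\phi^*$ depends only on $a_1,a_2,p_1,p_2$: indeed $b_i$ is determined by $a_i$ through \eqref{eq-relaibi}, and hence $\gamma_i$, $\alpha_i$, $\Lambda_i$, $\Phi_i$, $\omega^*$ and the whole construction are functions of $a_1,a_2,p_1,p_2$ alone.

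The construction is essentially routine, being a finite-dimensional perturbation of the uncoupled situation; I expect the only slightly delicate points to be (i) making sure the implicit function theorem is legitimately applicable, i.e.\ recording the $C^1$-regularity of $\Phi_i$ --- a classical fact, but one that is not entirely obvious since the asymmetric cosine $C_i$ is merely Lipschitz --- and (ii) the bookkeeping needed to see that the threshold $\phi^*$ can be taken to depend only on the stated data.
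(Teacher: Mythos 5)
Your proposal is correct and follows essentially the same route as the paper: the authors also introduce the auxiliary map $H(\theta,v)$ (your $L^{(v)}(\theta)$), apply the implicit function theorem at $(\omega^*,0)$ where the $\theta$-Jacobian is the diagonal matrix $\mathrm{diag}(\Phi_1'(\omega_1^*),\Phi_2'(\omega_2^*))$, and then use continuity of $\omega$ and of $JL$ together with the openness of the $\mathcal{D}^+$ conditions to find $\phi^*$. One small aside: $C_i$ is in fact $C^1$ (even $C^2$, since it solves $\ddot x=-a_ix^++b_ix^-$ with continuous right-hand side), not merely Lipschitz, so the regularity of $\Phi_i$ and $\Lambda_i$ needed for the implicit function theorem is immediate.
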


\begin{proof} Let us observe that, in view of Theorem \ref{teo-main} it is sufficient to prove that, under the given assumptions, there exist $\omega\in \RR$ such that $L(\omega)=0$ and $JL(\omega)$ is a $\mathcal{D}^+$-matrix.
	
	\noindent
	Let us first recall, from \eqref{proof6}, that we have
	\begin{equation} \label{eq-proggi1}
	L_i(\theta)=\Phi_i(\theta_i)+\gamma_i n\phi_i(+\infty) (\Lambda_i (\theta_{i}-\theta_{i+1})-\alpha_i),\quad \forall \ \theta \in \RR,
	\end{equation}
	where $\Lambda_i$ is defined in \eqref{eq-deflambda}. Let us define $H:\RR\times \RR \to \RR$ by
	\begin{equation}
	H(\theta,v)=(\Phi_1(\theta_1)+\gamma_1 n v_1 (\Lambda_1 (\theta_{1}-\theta_{2})-\alpha_1), \Phi_2(\theta_2)+\gamma_2 n v_2 (\Lambda_2 (\theta_{2}-\theta_{1})-\alpha_2)),\quad \forall \ \theta \in \RR,\ v\in \RR.
	\end{equation}
	From \eqref{eq-zerisemplici} we immediately infer that
	\[
	H(\omega^*_1,\omega^*_2,0,0)=0
	\]
	and
	\[
	J_\theta H(\omega^*_1,\omega^*_2,0,0)=\left(\begin{array}{cc}
	\Phi'_1(\omega^*_1)&0\\
	&\\
	0&	\Phi'_2(\omega^*_2)
	\end{array}  
	\right) \neq 0.
	\]
	Hence, by the implicit function theorem, we deduce that there exists $\hat{\phi}>0$ such that for every $(\phi_1(+\infty),\phi_2(+\infty))\in \RR$ with $|\phi_i(+\infty)|<\hat{\phi}$, $i=1,2$, there exists $\omega=\omega(\phi_1(+\infty),\phi_2(+\infty)) \in \mathbb{R}^2$ near $\omega^*$ such that
	\[
	L(\omega)=0.
	\]
	Now, let us observe that
	\[
	J L(\omega)=\left(\begin{array}{cc}
	\Phi'_1(\omega_1)+ \gamma_1  \phi_1(+\infty) \Sigma_1(\omega_1 -\omega_2)  & - \gamma_1  \phi_1(+\infty) \Sigma_1(\omega_1-\omega_2)  \\
	&\\
	- \gamma_2  \phi_2(+\infty) \Sigma_2(\omega_2-\omega_1)     &	\Phi'_2(\omega_2)  + \gamma_2  \phi_2(+\infty) \Sigma_2(\omega_2-\omega_1)
	\end{array}  
	\right),
	\]
	where $\Sigma_i$ is given in \eqref{eq-defsigma}.
	The continuity of $\omega$ as function of $(\phi_1(+\infty),\phi_2(+\infty))$, ensured by the implicit function theorem, implies that
	\[
	\lim_{|(\phi_1(+\infty),\phi_2(+\infty))|\to 0^+} J L(\omega)=\left(\begin{array}{cc}
	\Phi'_1(\omega^*_1)&0\\
	&\\
	0&	\Phi'_2(\omega^*_2)
	\end{array}  
	\right);
	\]
	by \eqref{eq-zerisemplici} the limit matrix is a $\mathcal{D}^+$-matrix. As a consequence, there exists $\phi^* \in (0,\hat{\phi})$ such that for every $(\phi_1(+\infty),\phi_2(+\infty))\in \RR$ with $|\phi_i(+\infty)|<\phi^*$ the matrix $J L(\omega)$ is a $\mathcal{D}^+$-matrix, as well. The result is then proved.
\end{proof}

\begin{remark}
A dual result, ensuring the existence of solutions unbounded in the past, could be proved when \eqref{eq-zerisemplici}
is replaced by
$$
\Phi_1(\omega^*_1)=\Phi_2(\omega^*_2)=0, \qquad \Phi'_i(\omega^*_i) > 0, \quad i=1,2.
$$
We omit the details for briefness.
\end{remark}

\begin{remark} \label{rem-lineare}
	Let us analyze the result of Corollary \ref{cor-piccolo} in the symmetric linear case $a_i=b_i=n^2$, $i=1, 2$. In this situation, in the recent paper \cite{BosDamPapPP} the existence of unbounded solutions has been proved under the assumption 
	\begin{equation} \label{eq-ipoJo}
	4|\phi_i(+\infty)| < |{\widehat p}_{i,n}|^2,\quad i=1,2,
	\end{equation}
	where 
	\begin{equation} \label{eq-fourier}
	{\widehat p}_{i,n}=\int_{0}^{2\pi} p_i(t)e^{int}\,dt
	\end{equation}
	(see Theorem 3.1 in \cite{BosDamPapPP}). The assumption $|\phi_i(+\infty)|<\phi^*$ ($i=1,2$), with $\phi^*=\phi^*(a_1,b_1,p_1,p_2)$, in Corollary \ref{cor-piccolo} is then on the same spirit of \eqref{eq-ipoJo}.
\end{remark}

Let us now focus on the situation where the function $\Phi_1$ (or $\Phi_2$) is identically zero, i.e.
\begin{equation} \label{eq-coeffFnullo}
\Phi_1(\theta_1)=0,\quad \forall \ \theta_1\in \R.
\end{equation}
Incidentally, let us observe that in the linear symmetric case $a_1=b_1=n^2$ assumption \eqref{eq-coeffFnullo} corresponds to the case when the number ${\widehat p}_{1,n}$ in \eqref{eq-fourier} is zero. Instead, in the asymmetric case $a_1\neq b_1$, condition \eqref{eq-coeffFnullo} is more tricky to be checked. However, some examples in which it holds can be provided.
For instance, if $a_{1}$ satisfies
\begin{equation}\label{eq-radicedia}
\frac{\sqrt{a_{1}}}{n}=\frac{s}{1+2k} \qquad \text{for some } s,k\in\mathbb{N} \text{ and } s>k,
\end{equation}
then the Fourier coefficient $c_{s,1}$ of $C_{1}$ vanishes (see \eqref{eq-coeffCi}), and \eqref{eq-coeffFnullo} holds when  $p_1(t)=\cos snt$.

For the sake of brevity and clarity, we present here just a couple of corollaries in which \eqref{eq-coeffFnullo} is assumed.
In the first we suppose that $a_{2}$ is such that
\begin{equation}\label{eq-cr2}
c_{r,2}\neq 0 \quad \text{for some } r\in \mathbb{N},
\end{equation}
and that
\begin{equation} \label{eq-sceltap2}
p_2(t)=\mu \cos rnt,\quad \forall \ t\in \R,
\end{equation}
with $\mu>0$.

\begin{corollary} \label{cor-coeffnullo}
	Let $a_i, b_i > 0$ satisfy, for $i=1,2$, assumption \eqref{eq-relaibi}; moreover, suppose that
	\begin{equation} \label{eq-immaginelambda1}
	(a_{1},a_{2})\in \mathcal{R},
	\end{equation}
where $\mathcal{R}$ is defined in \eqref{eq-defR}, and
that \eqref{eq-cr2} is fulfilled.
	Finally, assume that conditions \eqref{eq-rellimitiphii2}, \eqref{eq-coeffFnullo} and \eqref{eq-sceltap2} are satisfied.
	Then, for every $\phi_1(+\infty)\neq 0$ and for every $\phi_2(+\infty)\in \R$ there exists $\mu^*>0$ such that for every $\mu >\mu^*$ there exist two infinite measure sets $E^\pm\subset \RR\times \RR$ such that:
	\begin{itemize}
	\item for every solution $x$ of \eqref{oscacc} such that $(x(0),x'(0))\in E^+$, 
	\[
	\lim_{t\to +\infty} (|x_i(t)|^2+|x'_i(t)|^2) =+\infty, \qquad i=1,2,
	\]
	\item for every solution $x$ of \eqref{oscacc} such that $(x(0),x'(0))\in E^-$
	\[
	\lim_{t\to -\infty} (|x_i(t)|^2+|x'_i(t)|^2) =+\infty, \qquad i=1,2.
	\] 
	\end{itemize}
\end{corollary}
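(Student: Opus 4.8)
The plan is to reduce Corollary \ref{cor-coeffnullo} to Theorem \ref{teo-main} (and Remark \ref{rem-solmenoinfinito}) by exhibiting, for $\mu$ large, a zero $\omega$ of $L$ at which $JL(\omega)$ is a $\mathcal{D}^+$-matrix (giving $E^+$) and, symmetrically, a zero at which $JL$ is a $\mathcal{D}^-$-matrix (giving $E^-$). Recall from \eqref{proof6} that, since $\Phi_1\equiv 0$ by \eqref{eq-coeffFnullo}, the first component of $L$ is
\[
L_1(\theta)=\gamma_1 n\,\phi_1(+\infty)\bigl(\Lambda_1(\theta_1-\theta_2)-\alpha_1\bigr),
\]
while, using \eqref{eq-sceltap2} together with the Fourier expansion \eqref{eq-serieCi} of $C_2$, one computes explicitly
\[
\Phi_2(\theta_2)=-\frac{\gamma_2}{2}\int_0^{2\pi} C_2\!\left(\frac{\theta_2}{n}+t\right)\mu\cos(rnt)\,dt
= -\frac{\gamma_2\mu\pi}{2}\,c_{r,2}\cos(r\theta_2),
\]
a nonzero multiple of $\cos(r\theta_2)$ because $c_{r,2}\neq 0$ by \eqref{eq-cr2}. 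Thus $L_2(\theta)=-\tfrac{\gamma_2\mu\pi}{2}c_{r,2}\cos(r\theta_2)+\gamma_2 n\,\phi_2(+\infty)\bigl(\Lambda_2(\theta_2-\theta_1)-\alpha_2\bigr)$.

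First I would locate the zeros. Since $\phi_1(+\infty)\neq 0$, the equation $L_1(\theta)=0$ is equivalent to $\Lambda_1(\theta_1-\theta_2)=\alpha_1$; by assumption \eqref{eq-immaginelambda1}, $(a_1,a_2)\in\mathcal{R}$, so by the discussion around \eqref{eq-defR} (based on Lemma \ref{lem-propLambda}) this equation has simple solutions: there is $s_0\in(0,\pi)$ with $\Lambda_1(s_0)=\alpha_1$ and $\Lambda_1'(s_0)\neq 0$, equivalently $\Sigma_1(s_0)\neq 0$ by \eqref{eq-defsigma}; the other solution in $(-\pi,\pi)$ is $-s_0$, with $\Sigma_1(-s_0)=-\Sigma_1(s_0)$ by oddness. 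Having fixed $\theta_1-\theta_2=s_0$ (say), the equation $L_2(\theta)=0$ reads
\[
\frac{\gamma_2\mu\pi}{2}c_{r,2}\cos(r\theta_2)=\gamma_2 n\,\phi_2(+\infty)\bigl(\Lambda_2(-s_0)-\alpha_2\bigr),
\]
whose right-hand side is a fixed constant while the left-hand side has amplitude growing linearly in $\mu$; hence for $\mu>\mu^*$ (with $\mu^*$ depending on that constant and on $c_{r,2}$) there exists $\theta_2$ solving it, and generically $\cos$ passes through that value with nonzero derivative, so one can choose such a solution with $\sin(r\theta_2)\neq 0$. Then $\omega=(\theta_2+s_0,\theta_2)$ is a zero of $L$.

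Next I would check the $\mathcal{D}^{\pm}$ condition on $JL(\omega)$. Differentiating, writing $\sigma_1:=\gamma_1\phi_1(+\infty)\Sigma_1(s_0)$ and $\sigma_2:=\gamma_2\phi_2(+\infty)\Sigma_2(-s_0)$ and $d:=\frac{\gamma_2\mu\pi r}{2}c_{r,2}\sin(r\theta_2)$,
\[
JL(\omega)=\begin{pmatrix} \sigma_1 & -\sigma_1 \\[2pt] -\sigma_2 & d+\sigma_2 \end{pmatrix}.
\]
The entry $d$ is the only one growing with $\mu$, so for $\mu$ large the sign of $a_{22}=d+\sigma_2$ equals the sign of $d$, i.e. of $c_{r,2}\sin(r\theta_2)$; I can arrange this sign to be negative by the choice of $\omega$ (picking the appropriate branch of the $\cos$-equation, noting both signs of $\sin(r\theta_2)$ are attainable at nearby solutions). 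Meanwhile $a_{11}=\sigma_1$ has a fixed sign determined by $\mathrm{sgn}(\phi_1(+\infty)\Sigma_1(s_0))$; by switching between the roots $s_0$ and $-s_0$ of $\Lambda_1=\alpha_1$ I flip $\Sigma_1$, hence can make $a_{11}<0$ as well. Finally, the off-diagonal condition $|a_{12}a_{22}+a_{11}a_{21}|<2a_{11}a_{22}$ becomes $|\sigma_1\sigma_2 - \sigma_1 d|\cdot\text{(bounded)}$ versus $2\sigma_1(d+\sigma_2)$; since $a_{11}=\sigma_1$ is bounded and $a_{22}\sim d\to\infty$, the right-hand side $2\sigma_1 a_{22}$ grows like $|d|$ while $a_{12}a_{22}+a_{11}a_{21}=-\sigma_1(d+\sigma_2)-\sigma_1\sigma_2=-\sigma_1 d - 2\sigma_1\sigma_2$ also grows like $|d|$ with coefficient $|\sigma_1|$; the inequality $|\sigma_1|\,|d|<2|\sigma_1|\,|d|$ holds asymptotically provided $\sigma_1\neq 0$ — which is the case since $\phi_1(+\infty)\neq 0$ and $\Sigma_1(s_0)\neq 0$ — so there is $\mu^*$ beyond which $JL(\omega)$ is a genuine $\mathcal{D}^+$-matrix. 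Applying Theorem \ref{teo-main} yields $E^+$; repeating the construction at a second zero (choosing the branch making $d>0$, hence $a_{22}>0$, and the branch of $s_0$ making $a_{11}=\sigma_1>0$) produces a $\mathcal{D}^-$-matrix and, via Remark \ref{rem-solmenoinfinito}, the set $E^-$.

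The main obstacle is the bookkeeping of signs: one must verify that the four sign choices — branch of $\Lambda_1=\alpha_1$ (i.e. $\pm s_0$), branch of the $\cos$-equation for $\theta_2$ (i.e. sign of $\sin(r\theta_2)$), and the resulting signs of $a_{11}$ and $a_{22}$ — can be made independently, so that \emph{both} the pattern $(a_{11}<0,a_{22}<0)$ and $(a_{11}>0,a_{22}>0)$ are realizable for $\mu$ large. This is where one needs the genuine asymmetry input $(a_1,a_2)\in\mathcal{R}$ (guaranteeing two simple roots $\pm s_0$ with opposite $\Sigma_1$-signs) and the condition $\phi_1(+\infty)\neq 0$ (so $\sigma_1\neq 0$, making the strict inequality in the $\mathcal{D}^{\pm}$ definition survive in the limit); all remaining estimates are the routine asymptotics-in-$\mu$ already sketched.
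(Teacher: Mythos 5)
Your proposal is correct and follows essentially the same route as the paper's own proof: explicit computation of $\Phi_2$ via the Fourier coefficient $c_{r,2}$, reduction of $L_1=0$ to $\Lambda_1(\theta_1-\theta_2)=\alpha_1$ with its two simple roots $\pm\Lambda_1^*$, solvability of the resulting cosine equation for $\mu$ large, and the asymptotic sign analysis of $JL$ at the four zeros to extract one $\mathcal{D}^+$ and one $\mathcal{D}^-$ configuration. The only cosmetic difference is that the paper notes $\sin(r\theta_2)\neq 0$ follows automatically from $\mu>\hat\mu$ (the arccos lies in $(0,\pi)$) rather than by a genericity argument.
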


We observe that it is possible to find situations in which Corollary~\ref{cor-coeffnullo} applies.
Indeed, let us first notice that Lemma~\ref{lem-risol} implies that \eqref{eq-immaginelambda1} holds if $(a_{1},a_{2})$ is close to $(n^{2},n^{2})$.
This happens, for instance if $a_{1}$ satisfies \eqref{eq-radicedia} with $ s=2k $ and $k$ large enough, and if $\sqrt{a_{2}}$ is irrational and close to $n$.
With these choices \eqref{eq-coeffFnullo} holds with
$p_{1}(t)=\cos(2knt)$, while \eqref{eq-cr2} is trivially satisfied (see \eqref{eq-coeffCi}).

\begin{proof} Let us first notice that, from \eqref{proof6} and \eqref{eq-sceltap2}, recalling the Fourier expansion of $C_2$ given in \eqref{eq-serieCi}, we obtain
	\begin{equation} \label{eq-cosenoFnullo}
	\Phi_2(\theta_2)=-\dfrac{\gamma_2}{2} \pi \mu c_{r,2} \cos r \theta_2,\quad \forall \ \theta_2\in \R.
	\end{equation}
	As a consequence, recalling  \eqref{eq-coeffFnullo}, we obtain
	\begin{equation} \label{eq-LFnullo}
	\begin{array}{l}
\displaystyle	L_1(\theta)=\gamma_1 n\phi_1(+\infty) (\Lambda_1 (\theta_{1}-\theta_{2})-\alpha_1)\\
\\
\displaystyle L_2(\theta)=-\dfrac{\gamma_2}{2} \pi \mu c_{r,2} \cos r \theta_2+\gamma_2 n\phi_2(+\infty) (\Lambda_2 (\theta_{2}-\theta_{1})-\alpha_2),
	\end{array}
	\end{equation}
	for every $\theta \in \RR$.
	
	\noindent
	Now, let us look for solutions of $L(\theta)=0$; from the relation $L_1(\theta)=0$, recalling that $\phi_1(+\infty)\neq 0$, we deduce 
	\begin{equation} \label{eq-proofoggi11}
	\Lambda_1 (\theta_{1}-\theta_{2})=\alpha_1.
	\end{equation}
	From Lemma~\ref{lem-propLambda}, taking into account \eqref{eq-immaginelambda1}, we infer that there exists $\Lambda_1^* \in (0,\pi)$ such that 
	\begin{equation} \label{eq-risollambda1}
	\begin{array}{l}
	\Lambda_1(t)=\alpha_1 \quad \Longleftrightarrow \quad t=\pm \Lambda_1^*+2m\pi,\ m\in \mathbb{Z}\\
	\\
	\textnormal{sgn}(\Lambda'_1(\pm \Lambda_1^*))=\mp 1.
	\end{array}
	\end{equation}
	In particular, we choose $m=0$; then, from \eqref{eq-proofoggi11} and \eqref{eq-risollambda1} we obtain
	\begin{equation} \label{eq-solL1}
	\theta_1-\theta_2=\pm \Lambda_1^*.
	\end{equation}
	Replacing the last equality in the expression of $L_2$ in \eqref{eq-LFnullo} and recalling that $\Lambda_2$ is even and $2\pi$-periodic, the equation $L_2(\theta)=0$ reduces to
	\begin{equation} \label{eq-proofoggi12}
	-\pi \mu c_{r,2} \cos r \theta_2+2 n\phi_2(+\infty) (\Lambda_2 (\Lambda_1^*)-\alpha_2)=0,
	\end{equation}
	i.e. 
	\begin{equation} \label{eq-proofoggi13}
	\cos r \theta_2 = \dfrac{2 n\phi_2(+\infty) (\Lambda_2 (\Lambda_1^*)-\alpha_2)}{\pi \mu c_{r,2}}.
	\end{equation}
	Let now set
	\begin{equation} \label{eq-proofoggi14}
	\hat{\mu}= \left|\dfrac{2 n\phi_2(+\infty) (\Lambda_2 (\Lambda_1^*)-\alpha_2)}{\pi c_{r,2}}\right|;
	\end{equation}
	then, for every $\mu>\hat{\mu}$ the equation \eqref{eq-proofoggi13} can be solved and we obtain
	\begin{equation} \label{eq-proofoggi15}
	\theta_2 = \dfrac{1}{r} \left(\pm \arccos \dfrac{2 n\phi_2(+\infty) (\Lambda_2 (\Lambda_1^*)-\alpha_2)}{\pi \mu c_{r,2}} + 2h\pi\right),\quad h\in \mathbb{Z}.
	\end{equation}
	Choosing $h = 0$, we then conclude that, for every $\mu >\hat{\mu}$, the equation $L(\theta)=0$ has the four solutions
	$$
	\omega^{\pm,1}_{\mu} = \left(
		\Lambda_1^* + \dfrac{1}{r} \left(\pm \arccos \dfrac{2 n\phi_2(+\infty) (\Lambda_2 (\Lambda_1^*)-\alpha_2)}{\pi \mu c_{r,2}}\right),
	\dfrac{1}{r} \left(\pm \arccos \dfrac{2 n\phi_2(+\infty) (\Lambda_2 (\Lambda_1^*)-\alpha_2)}{\pi \mu c_{r,2}} \right)	
	\right)
	$$
	and
	$$
	\omega^{\pm,2}_{\mu} = \left(
	-\Lambda_1^* +\dfrac{1}{r} \left(\pm \arccos \dfrac{2 n\phi_2(+\infty) (\Lambda_2 (\Lambda_1^*)-\alpha_2)}{\pi \mu c_{r,2}} \right),
	\dfrac{1}{r} \left(\pm \arccos \dfrac{2 n\phi_2(+\infty) (\Lambda_2 (\Lambda_1^*)-\alpha_2)}{\pi \mu c_{r,2}}\right)
	\right).
	$$
	In order to apply Theorem \ref{teo-main}, we claim that one of the above four solutions, to be named $\omega^+$, is such that $JL(\omega^+)$ is a $\mathcal{D}^+$-matrix and another one, to be named $\omega^-$, is such that $JL(\omega^-)$ is a $\mathcal{D}^-$-matrix. To do this, recalling \eqref{eq-LFnullo} and the fact that $\Lambda'_i=\Sigma_i/n$ is $2\pi$-periodic and odd, we observe that 
	\begin{equation} 
	\begin{array}{ll}
	\displaystyle \partial_1 L_1 (\omega^{\pm,i}_{\mu})=&\displaystyle (-1)^{i+1} \gamma_1 \phi_1(+\infty) \Sigma_1 (\Lambda_1^*)\\
	&\\
	\displaystyle \partial_2 L_1 (\omega^{\pm,i}_{\mu})=&\displaystyle (-1)^{i} \gamma_1 \phi_1(+\infty) \Sigma_1 (\Lambda_1^*)\\
	&\\
	\displaystyle \partial_1 L_2 (\omega^{\pm,i}_{\mu})=&\displaystyle (-1)^{i} \gamma_2 \phi_2(+\infty) \Sigma_2 (\Lambda_1^*)\\
	&\\
	\displaystyle \partial_2 L_2 (\omega^{\pm,i}_{\mu})=&\displaystyle \pm \dfrac{\gamma_2}{2} \pi \mu c_{r,2} r \sin \arccos \dfrac{2 n\phi_2(+\infty) (\Lambda_2 (\Lambda_1^*)-\alpha_2)}{\pi \mu c_{r,2}} +\displaystyle (-1)^{i+1} \gamma_2 \phi_2(+\infty) \Sigma_2 (\Lambda_1^*),
	\end{array}
	\end{equation}
	for $i=1,2$. Now, since $\phi_1(+\infty)\neq 0$ and recalling \eqref{eq-risollambda1}, we have
	\begin{equation} \label{eq-proofoggi19}
	\operatorname{sgn} (\partial_1 L_{1} (\omega^{\pm,i}_{\mu})) = (-1)^{i} \operatorname{sgn} (\phi_1(+\infty));
	\end{equation}
	moreover, there exists $\check{\mu}\geq \hat{\mu}$ such that for every $\mu >\check{\mu}$ we have
	\begin{equation} \label{eq-proofoggi20}
	\operatorname{sgn} (\partial_2 L_{2} (\omega^{\pm,i}_{\mu})) = \pm \operatorname{sgn} (c_{r,2}).
	\end{equation}
	Hence, for $\mu >\check{\mu}$, the choice of $\omega^{\pm,i}_{\mu}$ has to be made according to the signs of $\phi_1(+\infty)$ and $c_{r,2}$.

	
	\noindent
	For the sake of briefness, we discuss the case $\phi_1(+\infty)>0$ and $c_{r,2}>0$, the other ones being similar. We set $\omega^+= \omega^{-,1}_{\mu}$ and $\omega^-= \omega^{+,2}_{\mu}$; hence, by construction,
	 $JL(\omega^+)$ and $JL (\omega^-)$ satisfy the sign conditions on the diagonal coefficients in order to be a $\mathcal{D}^\pm$-matrix. As far as the third condition in Definition \ref{def-dpiumeno} is concerned, we have that
	\begin{equation} \label{eq-proofoggi22}
	\begin{array}{l}
	\displaystyle \partial_1 L_{1} (\omega^\pm)\, \partial_1 L_{2} (\omega^\pm)+\partial_2 L_{1} (\omega^\pm)\, \partial_2 L_{2} (\omega^\pm)=\displaystyle -2 \gamma_1 \gamma_2 \phi_1(+\infty) \phi_2(+\infty) \Sigma_1 (\Lambda_1^*) \Sigma_2 (\Lambda_1^*)\\
	\\
	\qquad \displaystyle  + \gamma_1 \dfrac{\gamma_2}{2} \pi \mu c_{r,2} rn \sin \arccos \dfrac{2 n\phi_2(+\infty) (\Lambda_2 (\Lambda_1^*)-\alpha_2)}{\pi \mu c_{r,2}} \phi_1(+\infty) \Sigma_1 (\Lambda_1^*)
	\end{array}
	\end{equation}
	and
	\begin{equation} \label{eq-proofoggi23}
	\begin{array}{l}
	\displaystyle 2 \partial_1 L_{1} (\omega^\pm)\, \partial_2 L_{2} (\omega^\pm)=\displaystyle 2 \gamma_1 \gamma_2 \phi_1(+\infty) \phi_2(+\infty) \Sigma_1 (\Lambda_1^*) \Sigma_2 (\Lambda_1^*)+\\
	\\
	\qquad \displaystyle - \gamma_1 \gamma_2 \pi \mu c_{r,2} rn \sin \arccos \dfrac{2 n\phi_2(+\infty) (\Lambda_2 (\Lambda_1^*)-\alpha_2)}{\pi \mu c_{r,2}} \phi_1(+\infty) \Sigma_1 (\Lambda_1^*).
	\end{array}
	\end{equation}
	Hence, there exists $\mu^*\geq \check{\mu}$ such that for every $\mu >\mu^*$ the third condition in Definition \ref{def-dpiumeno} is satisfied; hence the values $\omega^\pm$ are such that $JL(\omega^\pm)$ is a $\mathcal{D}^\pm$ matrix. The thesis then follows from an application of Theorem \ref{teo-main}.
\end{proof}

As a last application, we discuss the case when the oscillators are symmetric, i.e. $a_i=b_i = n^2$ for $i=1,2$, and \eqref{eq-coeffFnullo} holds true; as already observed, this is equivalent to the assumption
\begin{equation}\label{eq-condp1}
\widehat{p}_{1,n}=0,
\end{equation}
where $\widehat{p}_{1,n}$ is as in \eqref{eq-fourier}. 
Let us observe that this situation is not covered by the results in \cite{BosDamPapPP}.

\begin{corollary} \label{cor-noJo}
Let $a_i=b_i=n^2$, for $i=1,2$, and suppose that conditions \eqref{eq-rellimitiphii2} and \eqref{eq-condp1} are satisfied. 

\noindent
Then, for every $\phi_1(+\infty)\neq 0$ and for every $\phi_2(+\infty)\in \R$ such that
\begin{equation} \label{eq-condnoJo}
|\phi_2(+\infty)| < \dfrac{3}{16} |\widehat{p}_{2,n}|,
\end{equation}
with ${\widehat p}_{2,n}$ as in \eqref{eq-fourier}, there exist two infinite measure sets $E^\pm\subset \RR\times \RR$ such that:
	\begin{itemize}
	\item for every solution $x$ of \eqref{oscacc} such that $(x(0),x'(0))\in E^+$, 
	\[
	\lim_{t\to +\infty} (|x_i(t)|^2+|x'_i(t)|^2) =+\infty, \qquad i=1,2,
	\]
	\item for every solution $x$ of \eqref{oscacc} such that $(x(0),x'(0))\in E^-$
	\[
	\lim_{t\to -\infty} (|x_i(t)|^2+|x'_i(t)|^2) =+\infty, \qquad i=1,2.
	\] 
	\end{itemize}
\end{corollary}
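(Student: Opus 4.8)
\medskip
\noindent\textbf{Proof strategy.}
The plan is to specialise the formula \eqref{proof6} for $L$ to the symmetric linear case, to solve $L(\theta)=0$ explicitly, and then to select among the zeros two points $\omega^{+},\omega^{-}$ at which the Jacobian $JL$ is a $\mathcal{D}^{+}$-matrix, resp. a $\mathcal{D}^{-}$-matrix; the sets $E^{+}$ and $E^{-}$ are then produced by Theorem~\ref{teo-main} and by Remark~\ref{rem-solmenoinfinito}.

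First I would record the ingredients when $a_{i}=b_{i}=n^{2}$: here $C_{i}(t)=\cos nt$, so that $\alpha_{i}=0$, $\gamma_{i}=\sqrt{2/n}$, and from \eqref{eq-deflambda}, \eqref{eq-defsigma} one gets $\Lambda_{i}(t)=\tfrac{2}{n}\cos t$ and $\Sigma_{i}(t)=-2\sin t$. Writing $\widehat p_{i,n}=|\widehat p_{i,n}|e^{\mathrm{i}\psi_{i}}$, a direct computation from \eqref{proof6} gives $\Phi_{i}(\theta_{i})=-\tfrac{\gamma_{i}}{2}|\widehat p_{i,n}|\cos(\theta_{i}+\psi_{i})$. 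In particular \eqref{eq-condp1} forces $\Phi_{1}\equiv 0$, while \eqref{eq-condnoJo} forces $\widehat p_{2,n}\neq 0$, so that $\Phi_{2}$ has only zeros at which $\Phi_{2}'$ does not vanish, two consecutive ones carrying opposite signs of $\Phi_{2}'$, with $|\Phi_{2}'(\theta_{2})|=\tfrac{\gamma_{2}}{2}|\widehat p_{2,n}|$ at each of them. Hence, by \eqref{proof6},
\[
L_{1}(\theta)=2\gamma_{1}\phi_{1}(+\infty)\cos(\theta_{1}-\theta_{2}),\qquad
L_{2}(\theta)=\Phi_{2}(\theta_{2})+2\gamma_{2}\phi_{2}(+\infty)\cos(\theta_{2}-\theta_{1}).
\]
Since $\phi_{1}(+\infty)\neq 0$, the equation $L_{1}(\theta)=0$ is equivalent to $\theta_{1}-\theta_{2}\equiv s\,\tfrac{\pi}{2}\pmod{2\pi}$ with $s\in\{+1,-1\}$; with this, $\cos(\theta_{2}-\theta_{1})=0$ and $L_{2}(\theta)=0$ reduces to $\Phi_{2}(\theta_{2})=0$. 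Thus the zeros of $L$ are exactly the points $(\omega_{2}+s\tfrac{\pi}{2},\omega_{2})$ with $\Phi_{2}(\omega_{2})=0$ and $s\in\{+1,-1\}$.

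Next I would compute $JL$ at such a zero $\omega$. Because $L_{1}$ depends on $\theta$ only through $\theta_{1}-\theta_{2}$ one has $\partial_{2}L_{1}=-\partial_{1}L_{1}$, and a short calculation using $\sin(s\tfrac{\pi}{2})=s$ gives
\[
JL(\omega)=\begin{pmatrix} -2\gamma_{1}\phi_{1}(+\infty)s & 2\gamma_{1}\phi_{1}(+\infty)s \\ -2\gamma_{2}\phi_{2}(+\infty)s & \Phi_{2}'(\omega_{2})+2\gamma_{2}\phi_{2}(+\infty)s\end{pmatrix}.
\]
Put $P=\Phi_{2}'(\omega_{2})$ and $Q=2\gamma_{2}\phi_{2}(+\infty)s$; then \eqref{eq-condnoJo} gives $|Q|=2\gamma_{2}|\phi_{2}(+\infty)|<\tfrac{3}{4}\cdot\tfrac{\gamma_{2}}{2}|\widehat p_{2,n}|=\tfrac{3}{4}|P|$, so $a_{22}=P+Q$ has the sign of $P$. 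For $E^{+}$ I would take $s=\operatorname{sgn}\phi_{1}(+\infty)$ and $\omega_{2}$ a zero of $\Phi_{2}$ with $\Phi_{2}'(\omega_{2})<0$ (such a zero exists): then $a_{11}<0$ and $a_{22}<0$. For $E^{-}$ I would take $s=-\operatorname{sgn}\phi_{1}(+\infty)$ and a zero with $\Phi_{2}'(\omega_{2})>0$, giving $a_{11}>0$ and $a_{22}>0$. In both cases $a_{12}=-a_{11}$ yields $a_{12}a_{22}+a_{11}a_{21}=a_{11}(a_{21}-a_{22})$ and $2a_{11}a_{22}=2|a_{11}|\,|a_{22}|$, so the third requirement in Definition~\ref{def-dpiumeno} becomes $|a_{21}-a_{22}|<2|a_{22}|$, i.e. $|P+2Q|<2|P+Q|$. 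Writing $Q=tP$ with $|t|<\tfrac{3}{4}$ and using $1+t>\tfrac14>0$, this reads $|1+2t|<2(1+t)$: if $t\ge -\tfrac12$ it is $1+2t<2+2t$, true; if $-\tfrac34<t<-\tfrac12$ it is $-(1+2t)<2+2t$, i.e. $t>-\tfrac34$, again true. Hence $JL(\omega^{+})$ is a $\mathcal{D}^{+}$-matrix and $JL(\omega^{-})$ a $\mathcal{D}^{-}$-matrix, and Theorem~\ref{teo-main} applied at $\omega^{+}$ together with Remark~\ref{rem-solmenoinfinito} applied at $\omega^{-}$ provide the claimed sets $E^{+}$ and $E^{-}$.

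The step I expect to be most delicate is the simultaneous bookkeeping of signs: one must couple the choice of $s$ with the choice of the branch of zeros of $\Phi_{2}$ so that both diagonal entries of $JL$ acquire the prescribed sign, and only then check that the off-diagonal inequality is preserved. What makes this tractable is the relation $a_{12}=-a_{11}$, a consequence of $L_{1}$ being a function of $\theta_{1}-\theta_{2}$ alone, which collapses the $\mathcal{D}^{\pm}$ test to the scalar inequality $|P+2Q|<2|P+Q|$; and the constant $\tfrac{3}{16}$ in \eqref{eq-condnoJo} is calibrated exactly so that $|Q|<\tfrac34|P|$, which is the sharp range for that inequality.
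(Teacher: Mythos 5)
Your proposal is correct and follows essentially the same route as the paper: specialize $L$ to the symmetric linear case, solve $L_1(\theta)=0$ to get $\theta_1-\theta_2\equiv\pm\pi/2$, reduce $L_2(\theta)=0$ to $\Phi_2(\theta_2)=0$, and select among the four zeros the ones where $JL$ is a $\mathcal{D}^{+}$- resp.\ $\mathcal{D}^{-}$-matrix before invoking Theorem \ref{teo-main} and Remark \ref{rem-solmenoinfinito}. Your reduction of the off-diagonal inequality to $|1+2t|<2(1+t)$ with $|t|<3/4$ (exploiting $a_{12}=-a_{11}$) is a slightly cleaner packaging of the paper's explicit computations \eqref{eq-pr111}--\eqref{eq-pr113}, and correctly identifies why the constant $3/16$ is the natural threshold.
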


\begin{proof} 
	First of all, let us observe that in this situation the functions $C_i$ and $\Lambda_i$ in \eqref{eq-defCi} and \eqref{eq-deflambda} are given by
	\begin{equation} \label{eq-pr100}
	C_i(t)=\cos n t,\quad \Lambda_i(t)=\dfrac{2}{n}\cos t,\quad \forall \ t\in \R,
	\end{equation}
	respectively, while the number $\alpha_i$ in \eqref{eq-defalphai} is zero. Moreover, the function $\Phi_{2}$ in \eqref{proof6} becomes
	 	\begin{equation} \label{eq-pr101}
	 \Phi_{2}(u)=-\dfrac{1}{\sqrt{2n}} \int_0^{2\pi} \cos (nt+u) p_{2}(t)dt,\quad \forall \ u\in \R;
	 \end{equation}
	this expression can be written as
		\begin{equation} \label{eq-pr102}
	\Phi_{2}(u)=-\dfrac{1}{\sqrt{2n}}\, |\widehat{p}_{2,n}| \cos (u+\psi_{2}),\quad \forall \ u\in \R,
	\end{equation}
	for some $\psi_2\in \R$. 
	
	\noindent
	From \eqref{eq-pr100}, \eqref{eq-pr101}, \eqref{eq-pr102} and the assumption on $\widehat{p}_{1,n}$ we deduce that
	\begin{equation} \label{eq-pr103}
	\begin{array}{ll}
\displaystyle L_1 (\theta)=2\sqrt{\frac{2}{n}} \phi_1(+\infty) \cos (\theta_1-\theta_2) \\
\\
\displaystyle L_2 (\theta)=-\dfrac{1}{\sqrt{2n}}\, |\widehat{p}_{2,n}| \cos (\theta_2+\psi_2)+2\sqrt{\frac{2}{n}} \phi_2(+\infty) \cos (\theta_2-\theta_1),
	\end{array}
	\end{equation}
	for every $\theta \in \RR$.
	
	\noindent
	Recalling that $\phi_1(+\infty)\neq 0$, 
	we can solve the equation $L_1(\theta)=0$ to obtain
	\begin{equation} \label{eq-pr104}
	\theta_1=\theta_2 \pm \dfrac{\pi}{2} +2m\pi,\quad m\in \mathbb{Z};
	\end{equation}
	as a consequence the equation $L_2 (\theta)=0$ reduces to
	\begin{equation} \label{eq-pr105}
|\widehat{p}_{2,n}| \cos (\theta_2+\psi_2)=0.
	\end{equation}
	We now observe that assumption \eqref{eq-condnoJo} implies that $\widehat{p}_{2,n}\neq 0$; hence, from \eqref{eq-pr105} we infer that 
	\begin{equation} \label{eq-pr106}
\theta_2=-\psi_2 \pm \dfrac{\pi}{2} +2h\pi,\quad h\in \mathbb{Z}.
	\end{equation}
	Choosing in particular $m = h = 0$, we conclude that the equation $L(\theta)=0$ has the four solutions 
	\begin{equation} \label{eq-pr107}
	\begin{array}{l}
	\displaystyle \omega^{\pm,1} = \left(-\psi_2 +\dfrac{\pi}{2} \pm \dfrac{\pi}{2} , -\psi_2 \pm \dfrac{\pi}{2} \right) \in \mathbb{R}^2,  \\
	\\
	\displaystyle \omega^{\pm,2} = \left(-\psi_2 -\dfrac{\pi}{2} \pm \dfrac{\pi}{2} , -\psi_2 \pm \dfrac{\pi}{2} \right) \in \mathbb{R}^2.
	\end{array}
	\end{equation}
	We now claim that one of the above four solutions, to be named $\omega^+$, is such that $JL(\omega^+)$ is a $\mathcal{D}^+$-matrix and another one, to be named $\omega^-$, is such that $JL(\omega^-)$ is a $\mathcal{D}^-$-matrix.
	
	\noindent
	To see this, let us observe that, from \eqref{eq-pr107},
	\begin{equation} \label{eq-pr109}
	\begin{array}{ll}
	\displaystyle \partial_1 L_1 (\omega^{\pm,i})&=\displaystyle \partial_1 L_1 (\theta)_{\vert \theta = \omega^{\pm,i}}=\displaystyle -2\sqrt{\frac{2}{n}} \phi_1(+\infty) \sin (\theta_1-\theta_2)_{\vert \theta = \omega^{\pm,i}}=\displaystyle (-1)^{i} 2\sqrt{\frac{2}{n}} \phi_1(+\infty) \\
	\\
	\displaystyle \partial_2 L_1 (\omega^{\pm,i})&=\displaystyle  \partial_2 L_1 (\theta)_{\vert \theta = \omega^{\pm,i}} =2\sqrt{\frac{2}{n}} \phi_1(+\infty) \sin (\theta_1-\theta_2)_{\vert \theta = \omega^{\pm,i}} = (-1)^{i-1} 2\sqrt{\frac{2}{n}} \phi_1(+\infty) \\
	\\
	\displaystyle \partial_1 L_2 (\omega^{\pm,i})&=\displaystyle  \partial_1 L_2 (\theta)_{\vert \theta = \omega^{\pm,i}} = 
	2\sqrt{\frac{2}{n}} \phi_2(+\infty) \sin (\theta_2-\theta_1)_{\vert \theta = \omega^{\pm,i}} = (-1)^{i} 2\sqrt{\frac{2}{n}} \phi_2(+\infty)   \\
	\\
	\displaystyle \partial_2 L_2 (\omega^{\pm,i})&=\displaystyle  \partial_2 L_2 (\theta)_{\vert \theta = \omega^{\pm,i}} = \dfrac{1}{\sqrt{2n}} |\widehat{p}_{2,n}| \sin (\theta_2+\psi_2)
	-2\sqrt{\frac{2}{n}} \phi_2(+\infty) \sin (\theta_2-\theta_1)_{\vert \theta = \omega^{\pm,i}} \\
	&=  \displaystyle \pm \dfrac{1}{\sqrt{2n}} |\widehat{p}_{2,n}|
	+ (-1)^{i-1} 2\sqrt{\frac{2}{n}} \phi_2(+\infty),
	\end{array}
	\end{equation}
	for $i=1,2$. 
	Focusing for the sake of briefness on the case $\phi_1(+\infty)>0$, we obtain from \eqref{eq-condnoJo} that
	\begin{equation} \label{eq-pr110}
	\begin{array}{l}
	\operatorname{sgn} (\partial_1 L_{1} (\omega^{-,1}))\cdot \operatorname{sgn} (\partial_2 L_{2} (\omega^{-,1})) >0\\
	\\
	\operatorname{sgn} (\partial_1 L_{1} (\omega^{+,2}))\cdot \operatorname{sgn} (\partial_2 L_{2} (\omega^{+,2})) >0.
	\end{array}
	\end{equation}
	Setting $\omega^+ = \omega^{+,2}$ and $\omega^-=\omega^{-,1}$, since 
	\begin{equation} \label{eq-pr111}
	\begin{array}{l}
	\displaystyle \partial_1 L_{1} (\omega^\pm)\, \partial_1 L_{2} (\omega^\pm)+\partial_2 L_{1} (\omega^\pm)\, \partial_2 L_{2} (\omega^\pm)=\displaystyle \frac{1}{n}\left(16\phi_1(+\infty)\phi_2(+\infty) -2|\widehat{p}_{2,n}|\phi_1(+\infty)\right)\\
	\\
	\displaystyle  
	\end{array}
	\end{equation}
	and
	\begin{equation} \label{eq-pr112}
	\begin{array}{l}
	\displaystyle 2 \partial_1 L_{1} (\omega^\pm)\, \partial_2 L_{2} (\omega^\pm)=\frac{1}{n}\left(-16\phi_1(+\infty)\phi_2(+\infty) +4|\widehat{p}_{2,n}|\phi_1(+\infty)\right),
	\end{array}
	\end{equation}
	from the same assumption \eqref{eq-condnoJo} we deduce that
	\begin{equation} \label{eq-pr113}
	\begin{array}{l}
	|\partial_1 L_{1} (\omega^\pm)\, \partial_1 L_{2} (\omega^\pm)+\partial_2 L_{1} (\omega^\pm)\, \partial_2 L_{2} (\omega^\pm)|<2 \partial_1 L_{1} (\omega^\pm)\, \partial_2 L_{2} (\omega^\pm)
	\end{array}
	\end{equation}
	as well. From \eqref{eq-pr110} and \eqref{eq-pr113} we conclude that $JL(\omega^\pm)$ is a $\mathcal{D}^\pm$-matrix.
	The thesis then follows from an application of Theorem \ref{teo-main}.
\end{proof}

\medbreak
\noindent
\textbf{Acknowledgments.} The authors are grateful to Rafael Ortega for having proposed the subject of this investigation and for his enduring encouragement.

\end{document}